\documentclass{amsart}
\usepackage{tikz}
\usetikzlibrary{arrows.meta,decorations.pathreplacing}
\usepackage{xcolor}
\usepackage{amssymb,latexsym,amsmath,extarrows}
\usepackage{graphicx,mathrsfs}
\usepackage{hyperref,url}
\numberwithin{equation}{section}

\definecolor{capblue}{RGB}{42,113,155}
\definecolor{capteal}{RGB}{63,145,137}
\definecolor{caporange}{RGB}{191,104,62}
\tikzset{
  axis/.style={-{Stealth[length=4pt]},thin,gray},
  point/.style={circle,fill=black,inner sep=1.3pt},
  every picture/.style={font=\small}
}

\newtheorem{theorem}{Theorem}[section]
\newtheorem{lemma}[theorem]{Lemma}

\newtheorem{proposition}[theorem]{Proposition}
\newtheorem{remark}[theorem]{Remark}

\newtheorem{corollary}[theorem]{Corollary}

\newcommand{\e}{\epsilon}

\newcommand{\wh}{\widehat}

\newcommand{\ZR}{\mathbb{R}}
\newcommand{\R}{\mathbb{R}}
\newcommand{\ZT}{\mathbb{T}}

\newcommand{\ZZ}{\mathbb{Z}}

\newcommand{\supp}{{\rm supp}}

\begin{document}

\hypersetup{hidelinks}

\title{Diameter-free reverse inequalities and superorthogonality}

\author{Adam Cushman} \address{ Adam Cushman\\  Department of Mathematics\\ Indiana University Bloomington, USA} \email{acushma@iu.edu}

\author{Ciprian Demeter} \address{ Ciprian Demeter\\  Department of Mathematics\\ Indiana University Bloomington, USA} \email{demeterc@iu.edu}

\author{Shukun Wu} \address{ Shukun Wu\\  Department of Mathematics\\ Indiana University Bloomington, USA} \email{shukwu@iu.edu}

\begin{abstract}
We prove three results as part of the program of diameter-free estimates initiated in \cite{Cushman-Demeter-Wu}. 
The first two are reverse square function estimates for the light cone in $\mathbb{R}^3$. 
We first establish an abstract $L^4$ inequality of independent interest, under an ordered superorthogonality hypothesis: for every four distinct indices, only the two nonalternating pairings are required to vanish. 
The loss is $C(1+\log N)^2$, where $N$ is the number of functions.
An alternating-determinant argument verifies this hypothesis for separated cone sectors.
This gives a diameter-free estimate for
arbitrary disjoint angular intervals, at the thickness determined by their smallest width, with no upper restriction on the radial parameter. 

For the canonical equal-width partition on a fixed radial annulus, we obtain the loss $C(1+\log N)^{1/4}$, which is sharp up to constants. 
This refines the estimate by Guth-Wang-Zhang, via a different approach.
The improvement uses additional orthogonality between diagonal and off-diagonal differences, together with bounded overlap of dyadic difference shells. 

Our third result is the diameter-free $\ell^2L^6$ decoupling for arbitrary partitions of the parabola, with an $N^\varepsilon$ loss independent of the interval widths. 
The argument is an adaptation of the method from \cite{Cushman-Demeter-Wu} and implies their three-fold additive-energy estimate for the parabola. 

The three proofs use variants of interlacing and special orthogonality in place of wave packet analysis, multilinearity and parabolic/Lorentz rescaling.
Together, these results provide further evidence for the scope of the paradigm introduced in \cite{Cushman-Demeter-Wu}.
\end{abstract}

\maketitle

\section{Introduction}

The results in this paper are follow-ups to the diameter-free program initiated in \cite{Cushman-Demeter-Wu}. 
We present two reverse square function estimates for the cone and an $\ell^2L^6$ decoupling inequality for the parabola. 
The proofs are different, but have a common feature: the standard toolkit of wave packet analysis, multilinearity, and rescaling is replaced with variants of interlacing and special orthogonality. 
Their applicability to these three estimates gives further evidence for the breadth of this approach.

\smallskip 

The emergence of superorthogonality in this line of research is of independent interest. 
We therefore begin with an abstract result.

Let $r\geq1$ be an integer. Given a measure space $(X,\mu)$, a family
$\{f_j:j\in J\}$ of measurable functions is called superorthogonal for
$2r$-tuples of a certain type (see
\cite{Pierce-superorthogonality,GPRY}) if
\begin{equation}
\label{super-ortho}
    \int_X f_{j_1}\cdots f_{j_r}
    \overline{f_{j_{r+1}}\cdots f_{j_{2r}}}\,d\mu=0
\end{equation}
whenever $(j_1,\ldots,j_{2r})$ lies in a specified subset of $J^{2r}$.
The case $r=1$ coincides with standard orthogonality.
We work with finite families throughout the paper.

Typically, superorthogonality implies reverse square function estimates of the form
\begin{equation}
\nonumber
    \Big\|\sum_{j\in J}f_j\Big\|_p
    \ll \Big\|\Big(\sum_{j\in J}|f_j|^2\Big)^{1/2}\Big\|_p.
\end{equation}
Classic examples include Khintchine's inequality and the bi-orthogonality for the Fourier extension of convex curves.
We refer the reader to \cite{Pierce-superorthogonality,GPRY} for further discussion.

\smallskip

A particularly flexible condition is the type IV superorthogonality studied in \cite{GPRY}, which requires \eqref{super-ortho} whenever $j_1,\ldots,j_{2r}$ are distinct. 
For $r=2$, this still requires cancellation for all three pairings of four distinct indices into two unconjugated and two conjugated factors, up to conjugation, see the remark below. 
On the other hand, reverse square function estimates with a slowly growing dependence on $\#J$ may hold under weaker cancellation.
The cone estimate of Guth--Wang--Zhang \cite{GWZ}, whose proof uses wave packet geometry, provides a natural setting in which to investigate this question.

In this paper, we establish reverse square function estimates in $L^4$ under a weaker, ordered superorthogonality hypothesis for quadruples. 
We require only the two nonalternating pairings to vanish. 
The first result is the following abstract inequality.

\begin{samepage}
\begin{theorem}
\label{quartic-ortho-thm}
Let $N\geq1$ and $f_1,\ldots,f_N\in L^4(X,\mu)$.
Suppose that, for every $i<j<k<l$,
\begin{equation}
\label{ortho-hypo}
    \int_X f_i f_j\overline{f_k f_l}\,d\mu=0,
    \qquad
    \int_X f_i f_l\overline{f_j f_k}\,d\mu=0.
\end{equation}
Then there exists an absolute constant $C$ such that
\begin{equation}
\label{reverse-sqfcn}
    \Big\|\sum_{n=1}^N f_n\Big\|_4
    \leq C(1+\log N)^2
    \Big\|\Big(\sum_{n=1}^N|f_n|^2\Big)^{1/2}\Big\|_4.
\end{equation}
\end{theorem}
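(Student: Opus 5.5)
The plan is to expand $F^2$, where $F=\sum_n f_n$, using a dyadic tree on the index set, so that the ordered hypothesis \eqref{ortho-hypo} becomes $L^2$-orthogonality between blocks. Write $\|F\|_4^2=\|F^2\|_2$ and
\[
F^2=\sum_n f_n^2+2\sum_{i<j}f_if_j .
\]
The diagonal part satisfies $\|\sum_n f_n^2\|_2\le\|S\|_4^2$, where $S=(\sum|f_n|^2)^{1/2}$. For the off-diagonal part, assume $N=2^M$ and embed $\{1,\dots,N\}$ in a dyadic tree. Each pair $i<j$ is assigned to the unique dyadic interval $I$ in which $i$ lies in the left child $I_\ell$ and $j$ in the right child $I_r$. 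With $A_I=\sum_{n\in I_\ell}f_n$ and $B_I=\sum_{n\in I_r}f_n$ this gives
\[
\sum_{i<j}f_if_j=\sum_{m=0}^{M-1}G_m,\qquad G_m=\sum_{|I|=2^{M-m}}A_IB_I .
\]
The triangle inequality over the $M\approx\log N$ levels will cost one factor $1+\log N$.

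At a fixed level $m$, the intervals are disjoint and ordered. For $I<I'$, every term of $\int A_IB_I\overline{A_{I'}B_{I'}}$ has the form $\int f_if_j\overline{f_kf_l}$ with $i<j<k<l$, which is the first (nonalternating) pairing in \eqref{ortho-hypo}, so it vanishes. Hence
\[
\|G_m\|_2^2=\sum_I\|A_IB_I\|_2^2 .
\]
It then suffices to prove the bilinear estimate
\begin{equation}
\label{bilin-plan}
\|A_IB_I\|_2\le C(1+\log N)\,\|S_{I_\ell}S_{I_r}\|_2 ,
\end{equation}
where $S_J=(\sum_{n\in J}|f_n|^2)^{1/2}$. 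Given \eqref{bilin-plan}, the pointwise inequality $\sum_I S_{I_\ell}^2S_{I_r}^2\le S^4$ gives $\|G_m\|_2\le C(1+\log N)\|S\|_4^2$. Summing over the $\log N$ levels then yields \eqref{reverse-sqfcn} with constant $C(1+\log N)^2$; one power of the logarithm comes from the levels and one from \eqref{bilin-plan}. Summing levels by the triangle inequality, rather than inducting on $\|F\|_4$, is deliberate: an induction replacing $\int|A_I|^2|B_I|^2$ by $\|A_I\|_4^2\|B_I\|_4^2$ produces a recursion $K(N)^{1/2}\lesssim 1+\sum_m K(N2^{-m})^{1/2}$ that does not close.

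The main obstacle is \eqref{bilin-plan}. Expanding gives $|A_IB_I|^2=\sum f_if_j\overline{f_kf_l}$ with $i,k\in I_\ell$ and $j,l\in I_r$. The nested configurations, $i<k<l<j$ or $k<i<j<l$, are killed by the second pairing in \eqref{ortho-hypo}. The crossing configurations $i<k<j<l$ are exactly the alternating pairings, about which nothing is assumed. To handle them, I would decompose both factors by distance to the split point $c$ of $I$: write $A_I=\sum_s A^{(s)}$ and $B_I=\sum_t B^{(t)}$, where $A^{(s)}$ collects $f_n$ with $c-n\sim2^s$ and $B^{(t)}$ collects $f_n$ with $n-c\sim2^t$. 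I would then regroup $A_IB_I$ as
\[
\sum_s\big(A^{(s)}B^{(\le s)}+A^{(<s)}B^{(s)}\big),
\]
so that each piece pairs an outer shell with everything inside it. For two such pieces at scales $s<s'$, the outer shell at scale $s'$ lies entirely outside the interval spanned by the indices of the scale-$s$ piece. The goal is to check that the cross terms between different scales then involve only nested or separated quadruples, and hence vanish; this is where the orthogonality between the two pieces must be verified carefully, including the terms with repeated indices. That would give $\|A_IB_I\|_2^2=\sum_s\|A^{(s)}B^{(\le s)}+A^{(<s)}B^{(s)}\|_2^2$.

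Each piece would then be bounded by recursing on the inner factor, paying the second factor $1+\log N$ from the $\log N$ values of $s$, and finishing with Cauchy--Schwarz against $S_{I_\ell}S_{I_r}$. If the inter-scale orthogonality fails, I would first look for the failure in the terms with repeated indices and isolate them: they contain some $|f_n|^2$ and should be controllable by the square function directly.
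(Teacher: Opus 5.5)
The level orthogonality step is correct: for $I<I'$ at the same level, every quadruple in $\int A_IB_I\overline{A_{I'}B_{I'}}$ is of the first nonalternating type, so $\|G_m\|_2^2=\sum_I\|A_IB_I\|_2^2$. The gap is the bilinear estimate $\|A_IB_I\|_2\le C(1+\log N)\|S_{I_\ell}S_{I_r}\|_2$. It is false under \eqref{ortho-hypo}, and your fallback cannot repair it.

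Here is a counterexample. On $X=\mathbb{T}$ let $N=2n$ and $f_2=\cdots=f_n=0$. Set $f_{n+m}(x)=e(\lambda_m x)$ with $\lambda_m=4^m$ for $1\le m\le n$. Let $f_1=F_T-g$, where $F_T$ is the Fej\'er kernel of order $T=10\cdot 4^n$ and $g$ deletes its Fourier coefficients at the fewer than $n^3$ frequencies $\lambda_k+\lambda_l-\lambda_j$ and $\lambda_j+\lambda_k-\lambda_l$ ($j<k<l$).

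Every quadruple satisfies \eqref{ortho-hypo}:
\begin{itemize}
\item quadruples inside $\{n+1,\dots,2n\}$, by uniqueness of base-$4$ expansions;
\item quadruples containing $1$, because $\int f_1f_{n+j}\overline{f_{n+k}f_{n+l}}$ and $\int f_1f_{n+l}\overline{f_{n+j}f_{n+k}}$ are exactly the deleted coefficients;
\item all others, because they contain a zero function.
\end{itemize}

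At the top node, $A_I=f_1$ and $B_I=\sum_m e(4^mx)$. On $|x|\le 1/(2T)$ one has $F_T\ge 4T/\pi^2$ and $|B_I|\ge n/2$, and $\|g\|_2^2<n^3$ is negligible against $T$. Hence $\|A_IB_I\|_2^2\gg n^2T$, while $\|S_{I_\ell}S_{I_r}\|_2^2=n\|f_1\|_2^2\ll nT$, so the ratio is $\gg\sqrt N$.

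In this example, $|A_IB_I|^2=|f_1|^2|B_I|^2$ consists entirely of three-index terms $|f_1|^2f_j\overline{f_l}$, which \eqref{ortho-hypo} never constrains. So ``repeated-index terms are controllable by the square function directly'' is exactly what fails. That control is the extra hypothesis \eqref{another-super-ortho} of Corollary~\ref{ortho-cor}.

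Your inter-scale orthogonality also fails for distinct indices. The scale-$s'$ piece contains $f_kf_l$ with $k$ in the outer shell $A^{(s')}$, but $l\in B^{(\le s')}$ can lie closer to $c$ than the index $j\in B^{(s)}$ of a scale-$s$ term $f_if_j$. Then $k<i<l<j$, which is the alternating pairing. Separately, since $\|F\|_4^2=\|F^2\|_2$, your level count would give $\|F\|_4\ll(1+\log N)\|S\|_4$, a single logarithm, not the $(1+\log N)^2$ you state.

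The paper avoids both obstacles as follows.
\begin{itemize}
\item It pairs $|F|^2-S^2=2\sum_{i<j}\operatorname{Re}(f_i\overline{f_j})$ with $V=2\sum_{i<j}\operatorname{Im}(f_i\overline{f_j})$. By $2\operatorname{Im}z\operatorname{Im}w=\operatorname{Re}(z\overline w-zw)$, the four-index part of $V^2$ is $\operatorname{Re}(f_if_l\overline{f_jf_k}-f_if_j\overline{f_kf_l})$. So the alternating pairing cancels identically instead of being estimated.
\item What remains are the row energies $\int\sum_i|f_i|^2|\sum_{j<i}f_j-\sum_{j>i}f_j|^2$, which are precisely the three-index terms above. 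The paper does not bound these pointwise by $S$. It writes the signed row sums through the truncated Hilbert kernel acting on phase-modulated full sums $P_N(\theta)$, and applies H\"older as $\|S\|_4\|P_N(\theta)\|_4\le C(N)\|S\|_4^2$, using the a priori finite constant $C(N)$.
\item The real part is then recovered from $V(\theta)$ with the same kernel, which gives $C(N)^2\le 1+2A_N^2C(N)$.
\end{itemize}
Because the right side is linear in $C(N)$, this bootstrap closes. Your level-by-level induction pays a square of the subfamily constant at each of $\log N$ levels, and that does not close.
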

\end{samepage}

Some growth with $N$ in \eqref{reverse-sqfcn} is necessary, even under the additional superorthogonality used below. 
See Remark \ref{slow-growing-rmk}. 
We do not claim that the logarithmic exponent in Theorem~\ref{quartic-ortho-thm} is optimal.

\begin{remark}
\rm
For $i<j<k<l$, define
\begin{align*}
    A_{ijkl}&=\int_X f_i f_j\overline{f_k f_l}\,d\mu,
    &B_{ijkl}&=\int_X f_i f_l\overline{f_j f_k}\,d\mu,\\
    C_{ijkl}&=\int_X f_i f_k\overline{f_j f_l}\,d\mu.
\end{align*}The type IV superorthogonality studied in \cite{GPRY} required $A_{ijkl}=B_{ijkl}=C_{ijkl}=0$. On the other hand, 
Theorem~\ref{quartic-ortho-thm} says that if $A_{ijkl}=B_{ijkl}=0$, then the superficially weaker \eqref{reverse-sqfcn} holds, regardless of the behavior of $C_{ijkl}$.

The choice of ordered quadruples is important.
In fact, the vanishing of both $A_{ijkl}$ and $C_{ijkl}$ is not sufficient for an estimate such as \eqref{reverse-sqfcn}. 
Take $f_n(\theta)=e(n\theta)$ on $\ZT=\ZR/\ZZ$.
Then $A_{ijkl}=C_{ijkl}=0$, since $i+j<k+l$ and $i+k<j+l$.
However,
\[
    \Big\|\sum_{n=1}^N f_n\Big\|_4\asymp N^{3/4},
    \qquad
    \Big\|\Big(\sum_{n=1}^N|f_n|^2\Big)^{1/2}\Big\|_4=N^{1/2}.
\]
We do not know whether $B_{ijkl}=0$ alone suffices to prove
\eqref{reverse-sqfcn}.
\end{remark}

\bigskip  

\subsection{Applications to reverse square functions for the light cone}

Consider the parametrization of the light cone $r(1,t,t^2)$, with $r>0$ and
$t\in\ZR$. 

Let
\begin{equation}
\nonumber
    \Phi(r,t,u)=(r,rt,rt^2+u).
\end{equation}
We use the Fourier transform convention
\[
    \wh f(\xi)=\int_{\ZR^3}f(x)e(-x\cdot\xi)\,dx.
\]
For general $L^4$ functions, Fourier transforms and their supports are understood in the sense of tempered distributions.
As a direct corollary of Theorem~\ref{quartic-ortho-thm}, we obtain the following diameter-free reverse square function estimate.

\begin{samepage}
\begin{corollary}
\label{main-cor}
Let $N\geq1$ and consider ordered finite intervals
\begin{equation}
\nonumber
\begin{gathered}
    I_n=[\alpha_n,\beta_n],\qquad
    \alpha_n<\beta_n\leq\alpha_{n+1}\quad(1\leq n<N),\\
    \alpha_N<\beta_N.
\end{gathered}
\end{equation}
Set $w_n=\beta_n-\alpha_n$ and
\[
    w_*:=\min_{1\leq n\leq N}w_n>0.
\]
Let $a>0$, and assume that $g_1,\ldots,g_N\in L^4(\ZR^3)$ satisfy
\begin{equation}
\label{eq:radial-support}
    \supp(\wh g_n)\subset
    \{\Phi(r,t,u):r\geq a,\ t\in I_n,\ |u|\leq r w_*^2\}.
\end{equation}
Then, for an absolute constant $C$,
\begin{equation}
\label{eq:main-estimate}
    \Big\|\sum_{n=1}^N g_n\Big\|_4
    \leq C(1+\log N)^2
    \Big\|\Big(\sum_{n=1}^N|g_n|^2\Big)^{1/2}\Big\|_4.
\end{equation}
\end{corollary}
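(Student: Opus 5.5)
The proof will reduce Corollary~\ref{main-cor} to Theorem~\ref{quartic-ortho-thm}. First split the indices into the four residue classes $\{n\equiv m \bmod 4\}$, apply the theorem to each class, and recombine with the triangle inequality and $\sum_n|g_n|^2\ge\sum_{n\equiv m}|g_n|^2$. This costs only a factor $4$. The point of the splitting is that for indices $i<j$ in one class there are at least three full intervals between $I_i$ and $I_j$, so
\[
\operatorname{dist}(I_i,I_j)\ge 3w_*.
\]
Without the splitting, adjacent intervals may share an endpoint, and then the Fourier supports of $g_ig_l$ and $g_jg_k$ can genuinely meet. An example is $j=i+1$, $l=k+1$ with $t_1=t_2=\beta_i$ and $t_3=t_4=\beta_k$, which gives the trivial identity $\xi_1+\xi_4=\xi_2+\xi_3$.

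Within one class, I need to check \eqref{ortho-hypo}. Since $g_ig_j$ and $g_kg_l$ lie in $L^2$ with Fourier transforms supported in the closures of $S_i+S_j$ and $S_k+S_l$ (where $S_n$ is the set in \eqref{eq:radial-support}), Plancherel gives vanishing as soon as these closures are disjoint. So suppose $\xi_1+\xi_2=\xi_3+\xi_4$ (pairing $A$), or $\xi_1+\xi_4=\xi_2+\xi_3$ (pairing $B$), where $\xi_m=\Phi(r_m,t_m,u_m)$ and $t_1\in I_i$, $t_2\in I_j$, $t_3\in I_k$, $t_4\in I_l$.

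Write this as a signed measure $\mu=\sum\epsilon_m r_m\delta_{t_m}$ with $\epsilon_m=\pm1$. Comparing the first two coordinates, $\mu$ has total mass $0$ and first moment $0$. Comparing the third coordinate gives
\[
Q:=\sum\epsilon_m r_mt_m^2=-\sum\epsilon_m u_m,
\qquad |Q|\le w_*^2\sum r_m=2w_*^2R,
\]
where $R$ is the common total positive mass.
\begin{itemize}
\item Pairing $A$ (positive masses at $t_1,t_2$, negative at $t_3,t_4$). The equal centroid $c$ would have to satisfy $c\le t_2<t_3\le c$, which is impossible. So $S_i+S_j$ and $S_k+S_l$ are disjoint.
\item Pairing $B$ (positive masses at the outer points $t_1,t_4$, negative at the inner points $t_2,t_3$). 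Let $F$ be the second antiderivative of $\mu$ vanishing to the left of $t_1$. By the vanishing moments, $F$ is supported in $[t_1,t_4]$, and $Q=2\int F$. Also $F$ is piecewise linear, with slope $r_1$ on $[t_1,t_2]$ and slope $-r_4$ on $[t_3,t_4]$, and it is nonnegative. Since $t_2-t_1\ge 3w_*$ and $t_4-t_3\ge3w_*$, the two end triangles give
\[
\int F\ge \tfrac12\max(r_1,r_4)\,(3w_*)^2\ge \tfrac94 w_*^2R,
\]
so $Q\ge \tfrac92w_*^2R>2w_*^2R$, a contradiction.
\end{itemize}

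The main obstacle is passing from disjointness of the sum sets to disjointness of their closures, since the sets are unbounded in $r$. The estimates above are homogeneous and keep a margin proportional to $R\ge 2a>0$. In case $A$ I will make the separation quantitative in the same way: the centroid mismatch is at least $3w_*$ in the $t$-direction, weighted by $R$. This gives a positive distance between the sets in the $(r,rt,rt^2+u)$ coordinates after normalizing by $R$, hence disjoint closures.

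One further approximation step is needed: truncate $\wh g_n$ smoothly to bounded $r$ (or use that the relevant integrals are limits), so that the Plancherel argument applies to honest $L^2$ functions. This is routine. The hypotheses of Theorem~\ref{quartic-ortho-thm} then hold within each residue class, and \eqref{eq:main-estimate} follows with constant $4C$.
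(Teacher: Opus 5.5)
Your proof is correct. The difference from the paper lies in the geometric verification of \eqref{ortho-hypo}.

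The paper splits the indices modulo $3$, giving gaps $\ge 2w_*$, and uses Lemma~\ref{support-lem}. An explicit computation shows $\det[\xi_a\ \xi_b\ \xi_c]>0$ for any three vectors from separated thickened sectors. Cofactor expansion then shows that the kernel of $[\xi_1\ \xi_2\ \xi_3\ \xi_4]$ is spanned by $(\Delta_1,-\Delta_2,\Delta_3,-\Delta_4)$. So every real linear relation among four ordered sector vectors has alternating signs, and both nonalternating pairings are excluded at once.

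You instead read the first two coordinates as the vanishing of the zeroth and first moments of the signed measure $\sum\epsilon_m r_m\delta_{t_m}$, and you treat the two pairings separately. The pattern $(+,+,-,-)$ is excluded by comparing centroids, which uses only the ordering of the intervals and not the thickness. The pattern $(+,-,-,+)$ is excluded by the identity $\sum\epsilon_m r_m t_m^2=2\int F$, where $F$ is a nonnegative concave tent whose end triangles exceed the thickening error $2w_*^2R$.

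Each route buys something different. The paper's route gives a single structural statement (all relations alternate), which is the interlacing principle the authors want to emphasize. Yours shows exactly where the thickness and the separation enter: only in the outer-versus-inner pairing. It also gives an explicit quantitative margin.

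Two minor remarks:
\begin{itemize}
\item If you add the two end triangles instead of taking the maximum, you get $\int F\ge \tfrac12(r_1+r_4)(2w_*)^2=2w_*^2R$ already for gaps $2w_*$. Then $Q\ge 4w_*^2R>2w_*^2R$, so residues modulo $3$ suffice, as in the paper.
\item The closure issue you flag as the main obstacle is not one. After the smooth truncation you plan anyway (which is exactly what the paper does), all Fourier supports are compact. Their Minkowski sums are then compact, so disjointness of the sum sets themselves is enough. In fact $S_i+S_j$ is already closed without truncation: $\xi_1\ge a>0$ on each $S_n$, so a bounded sum forces bounded summands.
\end{itemize}
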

\end{samepage}

We call \eqref{eq:main-estimate} diameter-free because its constant does not depend on the widths of the intervals, their gaps, or their total span. 
It is also independent of $a$, and there is no upper restriction on $r$. 
The permitted thickness still depends on the smallest width through \eqref{eq:radial-support}.
Figure~\ref{fig:sectors} illustrates the geometry.

\begin{figure}[htbp]
\centering
\resizebox{.98\textwidth}{!}{%
\begin{tikzpicture}[x=1cm,y=1cm,>=Stealth,
  line cap=round,line join=round,
  figlabel/.style={font=\small},
  tinylab/.style={font=\footnotesize},
  maincurve/.style={thick,capblue}]
 \begin{scope}[shift={(2.8,0)}]
  \foreach \a/\b in {-1.04/-.79,-.53/-.19,.10/.22,.51/.96}{
    \path[fill=capblue!14]
      plot[domain=\a:\b,samples=20] ({1.05*2.55*\x},{2.55*(1+.23*\x*\x)})
      -- plot[domain=\b:\a,samples=20] ({1.05*.68*\x},{.68*(1+.23*\x*\x)}) -- cycle;
    \draw[capblue,thin] ({1.05*.68*\a},{.68*(1+.23*\a*\a)})
                         -- ({1.05*2.55*\a},{2.55*(1+.23*\a*\a)});
    \draw[capblue,thin] ({1.05*.68*\b},{.68*(1+.23*\b*\b)})
                         -- ({1.05*2.55*\b},{2.55*(1+.23*\b*\b)});
  }
  \foreach \r in {.68,1.6,2.55}{
    \draw[gray!65,thin] plot[domain=-1.12:1.12,samples=50]
      ({1.05*\r*\x},{\r*(1+.23*\x*\x)});
  }
  \foreach \t in {-1.12,-.79,-.19,.22,.96,1.12}{
    \draw[gray!60,densely dotted] (0,0)
      -- ({1.05*.68*\t},{.68*(1+.23*\t*\t)});
    \draw[gray!70,thin,->]
      ({1.05*2.55*\t},{2.55*(1+.23*\t*\t)})
      -- ({1.05*2.85*\t},{2.85*(1+.23*\t*\t)});
  }
  \fill (0,0) circle (1.4pt);
  \node[tinylab,below] at (0,0) {$0$};
  \node[tinylab,anchor=east] at (-.93,.61) {$r=a$};
  \node[figlabel] at (0,3.90) {ordered cone sectors};
  \node[tinylab] at (0,2.15) {$r(1,t,t^2)$};
 \end{scope}
 \begin{scope}[shift={(6.7,.9)}]
   \node[figlabel,anchor=west] at (0,2.1) {angular intervals};
   \draw[axis] (-.2,.9)--(5.1,.9) node[right] {$t$};
   \foreach \a/\b/\conefiglabel in {0/.92/1,1.29/2.47/2,2.88/3.30/3,3.71/4.87/4}{
     \fill[capblue!14] (\a,.83) rectangle (\b,.98);
     \draw[capblue,line width=1.5pt] (\a,.9)--(\b,.9);
     \draw[capblue] (\a,.77)--(\a,1.03) (\b,.77)--(\b,1.03);
     \node[tinylab] at ({(\a+\b)/2},1.30) {$I_{\conefiglabel}$};
   }
   \draw[decorate,decoration={brace,mirror,amplitude=4pt}]
      (2.88,.65)--(3.30,.65) node[midway,below=5pt,tinylab] {$w_*$};
   \node[tinylab,align=left,anchor=west] at (0,-.2)
     {unequal widths and arbitrary gaps\\$|u/r|\leq w_*^2$};
 \end{scope}
\end{tikzpicture}}
\caption{Ordered intervals determine sectors on the cone. 
The radial extent shown on the left is only illustrative: the diameter-free estimate has no upper radial cutoff. 
The error bound is imposed on the normalized height $u/r$.}
\label{fig:sectors}
\end{figure}

Our approach gives a sharper bound in the canonical setting.
For the canonical reverse square function estimate considered in \cite{GWZ}, the intervals form an equal-width partition, and the radial parameter is restricted to a fixed annulus.
In this setting, we prove the following estimate, which is sharp up to constants.

\begin{samepage}
\begin{theorem}
\label{thm-cone}
Let $N\geq2$, put $\delta=N^{-1}$, and take the consecutive partition
\begin{equation}
\label{eq:canonical-intervals}
    I_j=[(j-1)\delta,j\delta],\qquad 1\leq j\leq N.
\end{equation}
Define the conic sector
\[
    \Gamma_j=\{\Phi(r,t,u):1\leq r\leq2,\ t\in I_j,
    \ |u|\leq\delta^2\}.
\]
Then, for Schwartz functions $f_1,\ldots,f_N$ with
$\supp(\wh f_j)\subset\Gamma_j$,
\begin{equation}
\label{sq:eq:main}
    \Big\|\sum_{j=1}^N f_j\Big\|_4
    \leq C(1+\log N)^{1/4}
    \Big\|\Big(\sum_{j=1}^N|f_j|^2\Big)^{1/2}\Big\|_4,
\end{equation}
where $C$ is absolute.
\end{theorem}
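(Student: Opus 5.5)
Expanding the fourth power gives
\[
\Big\|\sum_j f_j\Big\|_4^4=\Big\|\sum_{j,k} f_j\overline{f_k}\Big\|_2^2 ,
\]
and we organize the bilinear products $f_j\overline{f_k}$ by the index difference $m=k-j$. Write $F_0=\sum_j|f_j|^2$, which is the square function squared, and for $1\le |m|\le N$ set $F_m=\sum_j f_j\overline{f_{j+m}}$. Then
\[
\Big\|\sum_j f_j\Big\|_4^4=\Big\|\sum_m F_m\Big\|_2^2 .
\]
The plan is to show that the pieces $F_m$ are almost orthogonal when grouped dyadically in $|m|$, and that each dyadic group is controlled by $\|F_0\|_2^2$. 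That would give a $\log N$ loss in the fourth power, hence $(\log N)^{1/4}$ for the $L^4$ norm.

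\textbf{Step 1: location of the Fourier supports.} The Fourier support of $f_j\overline{f_k}$ lies in $\Gamma_j-\Gamma_k$. Writing $\xi=\Phi(r,s,u)$ and $\eta=\Phi(r',s',u')$ with $s\in I_j$, $s'\in I_k$, we compute the difference. For $|j-k|\sim 2^\ell\delta^{-1}\cdot\delta=2^\ell$ steps, the differences $\xi-\eta$ lie in a region where the conic "height deviation" $|\xi_1\xi_3-\xi_2^2|$ (suitably normalized) has size $\sim (2^\ell\delta)^2$. The heuristic is that two points on the cone at angular separation $\theta$ differ by a vector whose quadratic form $\xi_1\xi_3-\xi_2^2$ is comparable to $rr'\theta^2$ up to errors $O(\delta^2)$. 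We define dyadic shells
\[
S_\ell=\{\zeta:|\zeta_1\zeta_3-\zeta_2^2|\sim 4^\ell\delta^2\}
\]
(with a fattened core $S_0$) and check that $\supp\wh{F_m}\subset S_\ell$ for $|m|\sim2^\ell$, up to a constant widening. These shells have bounded overlap. Plancherel then gives
\[
\Big\|\sum_m F_m\Big\|_2^2\lesssim\sum_\ell\Big\|\sum_{|m|\sim2^\ell}F_m\Big\|_2^2 ,
\]
where the core term with $|m|\lesssim1$ is grouped with $F_0$. The delicate part is the cross term between $F_0$ and the off-diagonal pieces at small scales. There I expect to use the "additional orthogonality between diagonal and off-diagonal differences" promised in the abstract: $\Gamma_j-\Gamma_j$ sits essentially in the plane-like region $|\zeta_1\zeta_3-\zeta_2^2|\lesssim\delta^2$, separated from $S_\ell$ for $\ell\ge C$.

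\textbf{Step 2: each dyadic block is bounded by the square function.} We need
\[
\Big\|\sum_{|m|\sim2^\ell}F_m\Big\|_2^2\lesssim\|F_0\|_2^2=\Big\|\Big(\sum|f_j|^2\Big)^{1/2}\Big\|_4^4
\]
uniformly in $\ell$. Expanding gives
\[
\sum\int f_j\overline{f_{j+m}}\,\overline{f_{j'}}f_{j'+m'} .
\]
By the ordered superorthogonality verified for cone sectors (the alternating-determinant argument behind Corollary~\ref{main-cor}, i.e.\ the hypothesis \eqref{ortho-hypo}), the nonalternating quadruples vanish. What survives is essentially the configurations $\{j,j+m\}$ and $\{j',j'+m'\}$ that share indices or interlace. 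Within a fixed dyadic band of $|m|$, the interlacing configurations should be further restricted by the shell localization: a quadruple $j<j'<j+m<j'+m'$ with both differences $\sim 2^\ell$ must also satisfy a near-parallelogram relation. My approach would be to bound the surviving sum by Cauchy--Schwarz in the form $\sum_{j,k}\int|f_j|^2|f_k|^2$, possibly after a further decomposition into bi-orthogonal families spaced $2^\ell$ apart. This is where the constant must not depend on $\ell$.

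\textbf{Main obstacle.} Step 2 is the main obstacle: controlling the alternating (interlacing) pairings $C_{ijkl}$ within one dyadic band without any logarithmic loss. Theorem~\ref{quartic-ortho-thm} alone loses $(\log N)^2$, presumably because it does not exploit the shell structure. I would first try a Lorentz-free geometric argument. Fix $\ell$, split the indices into residue classes modulo $2^{\ell}$ (or into blocks of length $2^\ell$), and observe that within a band the products $f_j\overline{f_{j+m}}$ have Fourier supports in $S_\ell$ that are nearly disjoint for different $(j,m)$ modulo a parallelogram relation. This reduces the band sum to a bounded-overlap sum $\sum_{j,m}\|f_j\overline{f_{j+m}}\|_2^2\le\|F_0\|_2^2$.

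Summing over the $O(\log N)$ bands then gives
\[
\|\textstyle\sum f_j\|_4^4\lesssim(1+\log N)\,\|F_0\|_2^2 ,
\]
which is \eqref{sq:eq:main}.
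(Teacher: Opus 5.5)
Your Step~1 is essentially the paper's dyadic shell decomposition (Lemma~\ref{cone-scale-geometry}). Step~2, which you rightly call the main obstacle, has a genuine gap, and the route you sketch cannot close it. First, the surviving quadruples are not the crossing ones. In $\int f_j\overline{f_{j+m}}\,\overline{f_{j'}}f_{j'+m'}$ with $j<j'<j+m<j'+m'$ the sign pattern is $(+,-,-,+)$, so this is a $B_{ijkl}$ term and it vanishes by \eqref{ortho-hypo}. What survives are the nested configurations $j<j'<j'+m'<j+m$, i.e.\ the alternating terms $C_{ijkl}$. These create genuine Fourier overlap inside a single band. For instance, take $\zeta=(0,\zeta_2,\zeta_3)$ with $\zeta_2\sim2^\ell\delta$ and $\zeta_3/\zeta_2$ near $1$. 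It equals $\Phi(r,t,0)-\Phi(r,t',0)$ whenever $t+t'=\zeta_3/\zeta_2$ and $t-t'=\zeta_2/r$ with $r\in[1,2]$. So $\zeta$ lies in $\Gamma_j-\Gamma_k$ for about $2^\ell$ nested pairs with $|j-k|\sim2^\ell$. No splitting into residue classes or blocks removes this. In fact the reduction $\|\sum_{|m|\sim2^\ell}F_m\|_2^2\lesssim\sum_j\sum_{|m|\sim2^\ell}\|f_j\overline{f_{j+m}}\|_2^2$ is false, with a loss of order $2^\ell$. For the nonnegative bumps of Proposition~\ref{cone-sharpness-prop}, a band $K\le|m|<2K$ has squared $L^2$ norm $\gg\delta^8$, by the same mass-versus-support argument as for $H_K$. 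Meanwhile, the bounds used in \eqref{cone-example-denominator} give $\sum_{K\le|i-j|<2K}\|f_i\overline{f_j}\|_2^2\ll\delta^8/K$. A uniform bound for one band must draw on the near-diagonal part of $\|S\|_4^4$, which no overlap count inside the band can see.

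The missing idea is that the paper never estimates the alternating terms; it arranges for them to cancel. Put $V(\theta)=2\operatorname{Im}\sum_{i<j}e((i-j)\theta)f_i\overline{f_j}$. By $2\operatorname{Im}z\operatorname{Im}w=\operatorname{Re}(z\overline{w}-zw)$, the cross terms of $V(\theta)^2$ with four distinct indices combine pointwise into $\operatorname{Re}(f_if_l\overline{f_jf_k}-f_if_j\overline{f_kf_l})$. Here $C_{ijkl}$ has dropped out, so these terms integrate to zero by \eqref{ortho-hypo} alone. The mixed terms in the remaining row sums vanish because $\int|f_i|^2f_j\overline{f_k}\,dx=0$ for $j\ne k$, a consequence of \eqref{cone-repeated-separation}. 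This is where the diagonal/off-diagonal orthogonality actually enters. It is not needed for the core cross term in your Step~1, which is harmless since $\|\sum_jf_j\|_4^4\le2\|S\|_4^4+8\|U\|_2^2$. One obtains $\|V(\theta)\|_2\le2\|S\|_4^2$ for every $\theta$ (Corollary~\ref{ortho-cor}). Each one-sided band is then recovered exactly as $U_s=\int_0^1K_s(\theta)\,\mathrm{i}V(\theta)\,d\theta$ with $\|K_s\|_{L^1}\ll1$ (Lemmas~\ref{cone-kernel-lem} and~\ref{cone-dyadic-lem}). This is precisely the uniform band bound your Step~2 needs. Finally, Lemma~\ref{support-lem} requires separated sectors, so you must first split the indices into residue classes (modulo $11$ in the paper) before invoking ordered superorthogonality.
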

\end{samepage}

In Subsection~\ref{sec:cone-sharpness}, we construct a nonzero Schwartz family with these Fourier supports such that
\begin{equation}
\nonumber
    \Big\|\sum_{j=1}^N f_j\Big\|_4
    \geq c(1+\log N)^{1/4}
    \Big\|\Big(\sum_{j=1}^N|f_j|^2\Big)^{1/2}\Big\|_4,
\end{equation}
which shows that Theorem~\ref{thm-cone} is sharp up to constants.

The proof of Theorem~\ref{thm-cone} follows a route similar to that of Corollary~\ref{main-cor}, with additional orthogonality properties that hold on the fixed radial annulus; see Subsection~\ref{sec:equal-cap-geometry}. 
The equal spacing of the centers in \eqref{eq:canonical-intervals} is also used in the proof.

\bigskip 

\subsection{Diameter-free decoupling inequality for the parabola}
\label{sec:parabola-statement}

Our third result gives another example of a diameter-free reverse inequality, which concerns an arbitrary partition of the parabola. This generalizes the canonical decoupling in \cite{Bourgain-Demeter}, providing an alternative proof. 
It uses the interlacing method of \cite{Cushman-Demeter-Wu}, together with orthogonality of products whose Fourier supports have controlled overlap and an elementary estimate for planar sectors.
It does not use the quartic superorthogonality theorem above.

Write $\gamma(t)=(t,t^2)$ for $0\leq t\leq1$, and let $\mathcal N_\rho(E)$ denote the Euclidean $\rho$-neighborhood of a set $E\subset\ZR^2$. 
We use the same Fourier transform convention in $\ZR^2$ as for the cone.

\begin{theorem}[Diameter-free $\ell^2L^6$ decoupling for the parabola]
\label{thm:parabola}
Fix $A\geq1$. Let $I_1<\cdots<I_N$ be a partition of $[0,1]$ into
nondegenerate intervals, and put
\[
    \ell=\min_{1\leq i\leq N}|I_i|.
\]
Suppose that $F_i\in\mathcal S(\ZR^2)$ and
\begin{equation}
\label{eq:parabola-support}
    \supp(\wh F_i)\subset\mathcal N_{A\ell^2}(\gamma(I_i)).
\end{equation}
Then, for every $\varepsilon>0$,
\begin{equation}
\label{eq:parabola-main}
    \Big\|\sum_{i=1}^N F_i\Big\|_{L^6(\ZR^2)}
    \leq C_{A,\varepsilon}N^\varepsilon
    \Big(\sum_{i=1}^N\|F_i\|_{L^6(\ZR^2)}^2\Big)^{1/2}.
\end{equation}
The constant is independent of the lengths and locations of the
intervals.
\end{theorem}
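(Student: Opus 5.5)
We argue by induction on scales in the number of intervals, following the interlacing scheme of \cite{Cushman-Demeter-Wu}. Let $D(N)$ be the best constant in \eqref{eq:parabola-main} over all partitions of $[0,1]$ into at most $N$ intervals, all $A$, and all $F_i$ obeying \eqref{eq:parabola-support}. The aim is a recursion
\[
D(N)\leq C_A\, D(\lceil N/2\rceil)+C_A(1+\log N)^{C},
\]
or one of the form $D(N)\leq C_\varepsilon N^{\varepsilon/2}D(N^{1-\eta})$. Either iterates to $N^\varepsilon$, and the constants depend only on $A$, never on $\ell$ or on the locations of the intervals.

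\textbf{Step 1 (interlacing split).} Split the indices into odd and even classes, or more generally into $K$ residue classes mod $K$ with $K=K(\varepsilon)$ large. Within one class, consecutive intervals are separated by at least $(K-1)$ intervals, each of length $\geq\ell$, so there is a gap $\geq(K-1)\ell$. By the triangle inequality we may work with a single class at a cost of $K$.

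\textbf{Step 2 (merging into a coarser partition).} Merge each interval of the chosen class with its neighbors to produce a partition $\{J_m\}$ with about $N/K$ pieces. Each $F_i$ is now attached to some $J_m$. Enlarging the neighborhood to $\mathcal N_{A\ell^2}(\gamma(J_m))$ is not of the form $\mathcal N_{A'\ell'^2}$ with $\ell'=\min|J_m|$. The difficulty is that $\min|J_m|$ may be much bigger than $\ell$, so the thickness $A\ell^2$ is \emph{thinner} than allowed, and thinner supports are still admissible. Applying $D(N/K)$ to $G_m=\sum_{i\in J_m}F_i$ therefore gives
\[
\Big\|\sum F_i\Big\|_6\leq K\,D(N/K)\Big(\sum_m\|G_m\|_6^2\Big)^{1/2}.
\]

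\textbf{Step 3 (local decoupling of each merged block).} It remains to bound $\|G_m\|_6$ by $C(\sum_{i\in J_m}\|F_i\|_6^2)^{1/2}$, where $G_m$ has only $O(K)$ pieces, or $O(1)$ pieces that are well separated. For boundedly many pieces this is just Cauchy--Schwarz with constant $K^{1/2}$. That multiplies the recursion by $K^{3/2}$ and gives only $N^{c}$ with $c\sim\log K/\log K$, which is not small. So the real gain must come from an $L^6$ orthogonality estimate at the top scale, replacing the crude triangle inequality in Step 1. Expand
\[
\Big|\sum_{\text{class}}F_i\Big|^6=\Big|\sum_{i,j,k}F_iF_jF_k\Big|^2
\]
and use orthogonality of the trilinear products $F_iF_jF_k$. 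The Fourier support of such a product lies in the sum set $\mathcal N(\gamma(I_i)+\gamma(I_j)+\gamma(I_k))$. For separated intervals, these sum sets have bounded overlap by the elementary planar-sector estimate. This is a three-fold energy count: $t_1+t_2+t_3$ and $t_1^2+t_2^2+t_3^2$ are determined up to errors comparable to $\ell$ and $\ell^2$ respectively.

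\textbf{Main obstacle.} The crux is the bounded-overlap statement for triple sum sets of \emph{arbitrary-width} intervals at thickness $\ell^2$. Long intervals $I_i$ produce long sum sets, and their overlap with short ones is not controlled by diameter. I would handle this first by subdividing every long interval into subintervals of length in $[\ell,2\ell]$ on the Fourier side, which is legitimate inside the $L^6$ norm via a smooth partition. I would then run the canonical-style overlap count at that uniform scale $\ell$, and reassemble using the induction hypothesis on the long interval alone (a rescaled instance with fewer pieces). Balancing how many times this reassembly is used against the logarithmic overlap losses is what should produce the $N^\varepsilon$ bound.
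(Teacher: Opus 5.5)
Your sketch has no mechanism that produces an $N^\varepsilon$ loss, and the two concrete claims it rests on are incorrect.

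\textbf{The recursion and the overlap count.} The target recursion $D(N)\le C_AD(\lceil N/2\rceil)+C_A(1+\log N)^C$ with $C_A>1$ iterates to $N^{\log_2 C_A}$, not $N^\varepsilon$. What is needed is a branching factor $h$ that can be taken large, while the multiplicative loss in front of the smaller-scale constant is only $Ch^{\eta}$ with $C$ independent of $h$. The paper obtains $B(m)^6\le C_{A,\eta}[h+h^\eta(B(\lceil m/h\rceil)^6+1)]$.

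As you note, Steps 1--3 lose more than the trivial bound, so everything rests on the top-scale orthogonality. There you assert that the triple sum sets have bounded overlap, which is false. Already for singleton intervals, fixing $t_1+t_2+t_3$ and $t_1^2+t_2^2+t_3^2$ leaves a circle of triples, and the smallest entry determines the other two. So one can arrange $\asymp h$ distinct triples among $h$ intervals passing through one point. The correct bound is $O(h)$, and interlacing gives it for ordered intervals of \emph{arbitrary} lengths. Two distinct sorted triples on one fiber with $x<x'$ satisfy $x<x'\le y'\le y\le z\le z'$, so along a fiber the three interval labels move monotonically. The $O(A\ell^2)$ thickening is absorbed by moving the parameters $O_A(\ell)$, which crosses only $O_A(1)$ cells of length at least $\ell$.

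So long intervals are not the obstacle, and your remedy is harmful. Cutting them into $\ell$-pieces makes the number of pieces $\sim 1/\ell$ rather than $N$. Returning from $(\sum_{i,k}\|F_{i,k}\|_6^2)^{1/2}$ to $(\sum_i\|F_i\|_6^2)^{1/2}$ would then need a reverse decoupling inequality. The induction hypothesis does not supply one, and it fails by a power of $|I_i|/\ell$. Also, the planar-sector estimate plays no role in counting triple overlaps.

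\textbf{The missing idea: recovering the factor $h$.} Take $h$ balanced consecutive blocks $G_j=\sum_{i\in M_j}F_i$, with $s_j=\sum_{i\in M_j}\|F_i\|_6^2$ and $S=\sum_j s_j$. The overlap bound gives $\|\sum_iF_i\|_6^6\ll_A h\int(\sum_j|G_j|^2)^3$. Minkowski alone then yields only $B(m)\le h^{1/6}B(m/h)$.

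The paper splits $|G_j|^2=D_j+O_j$:
\begin{itemize}
\item $D_j$ contains the diagonal and near pairs (index gap at most $L$), so $|D_j|\ll_A\sum_{i\in M_j}|F_i|^2$ and $\|\sum_jD_j\|_3\ll_A S$.
\item $O_j$ contains the far pairs. Their frequencies $\pm(v-u)(1,u+v)+O(A\ell^2)$ lie in two-sided planar sectors with pairwise disjoint angular parts and $|\xi_1|\gg L\ell$.
\end{itemize}
Only here does the sector estimate enter. Interpolating $L^2$ orthogonality with uniform $L^p$ bounds for sector projections gives $\|\sum_jO_j\|_3^3\ll_\eta h^{1+\eta}\sum_j\|O_j\|_3^3$. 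The induction hypothesis gives $\|O_j\|_3\ll_A(B(\lceil m/h\rceil)^2+1)s_j$. Comparable norms and balanced blocks give $\sum_js_j^3\ll h^{-2}S^3$. The net power of $h$ is therefore $h\cdot h^{1+\eta}\cdot h^{-2}=h^\eta$. A dyadic pigeonholing in $\|F_i\|_6$ removes the comparability assumption at the end.

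None of these ingredients (near/far splitting, disjoint chord sectors, mass balance) appears in your proposal.
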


In particular, the loss in \eqref{eq:parabola-main} depends on the number of intervals, not on the reciprocal of the shortest length or on ratios of interval lengths. 
The shortest length determines only the allowed Fourier thickness in \eqref{eq:parabola-support}.
This answers the $p=6$ arbitrary-partition question in second author's work on decoupling for Cantor-like sets
\cite[Section~1]{Demeter-Cantor}, without a self-similarity assumption.

Theorem~\ref{thm:parabola} implies the parabola case of the three-fold additive-energy bound in \cite{Cushman-Demeter-Wu}.
This implication does not make the method independent of that work: the interlacing argument there provided both the methodology and the impetus for the decoupling  result presented here. 
There is a version of  Theorem~\ref{thm:parabola} for strictly convex curves. 
The proof is essentially the same, with a more careful choice of thickness in place of $Al^2$. 
In the interest of clarity, we decided not to include that argument.
We are not aware of any immediate application of either of these extensions of Theorem \ref{thm:parabola}, besides those explored in \cite{Cushman-Demeter-Wu}.

We give the proof in Section~\ref{sec:parabola}, followed by the weighted energy consequence in Subsection~\ref{sec:parabola-energy}.

\bigskip 

\subsection{Motivation and organization}

Our study of the diameter-free reverse inequality
\eqref{eq:main-estimate} was partially motivated by the diameter-free counting inequalities in \cite{Cushman-Demeter-Wu}. 
The cone argument uses an interlacing principle: the coefficients in the unique linear dependence among four ordered cone vectors alternate in sign. 
This excludes the two nonalternating pairings. 
The same property survives the thickening in \eqref{eq:radial-support} after separating the intervals. 
No counting theorem or parabola decoupling estimate is used as an input to either cone proof.

For the parabola, interlacing bounds the overlap of triple sums.
By Plancherel's theorem, this gives the special orthogonality needed for an induction on the number of intervals. 
Thus the three results share a method rather than a single abstract hypothesis: ordered cancellation for the cone and controlled Fourier overlap for the parabola. 
These distinct uses of interlacing illustrate how the program initiated in  \cite{Cushman-Demeter-Wu} extends beyond its original counting formulation.

\smallskip 

Section~\ref{proof-of-main-section} proves Theorem~\ref{quartic-ortho-thm}. 
A quartic identity controls an ordered imaginary-part sum, and a truncated Hilbert transform recovers the real part. 
Section~\ref{sec:geometry} establishes the cone geometry needed for both cone applications.
Section~\ref{sec:cone-estimates} contains the two cone proofs and the sharpness example. 
In the canonical argument, a dyadic decomposition of index gaps and bounded overlap of the corresponding Fourier supports give a single logarithm in the fourth power of the norm. 
Section~\ref{sec:parabola} contains the parabola proof and its energy consequence. 
None of the three upper bounds uses multilinear restriction, wave packet analysis, or parabolic rescaling.

\smallskip 

Throughout, $A\ll B$ means $A\leq CB$ for an absolute constant $C$, unless another dependence is stated; $A\gg B$ means $B\ll A$.
The notation $A\asymp B$ means that both inequalities hold.
We denote $e(t)=e^{2\pi\mathrm{i}t}$.
We use natural logarithms in estimates and $\log_2$ when counting dyadic scales.

\bigskip  

\subsection{AI usage} ChatGPT 5.6 Sol was instrumental in the production process of this manuscript. 
Once interlacing was discovered for curves in our previous work \cite{Cushman-Demeter-Wu}, we worked on its analogs in other contexts. 
One of these attempts led to the first manifestation of the forbidden pattern phenomenon for certain surfaces that resemble perturbed cones. 
The key superorthogonality emerged as the necessary tool to take advantage of the restricted number of permissible patterns for the cone.
ChatGPT Pro was also used to create pictures and to polish the paper.

\bigskip  

\noindent {\bf Acknowledgements.}
The second author is partially supported by the NSF grant DMS-2349828.
The third author is partially supported by the NSF grant DMS-2453583.

\bigskip 

\section{Proof of Theorem \ref{quartic-ortho-thm}}
\label{proof-of-main-section}

For simplicity, we set
\begin{equation*}
    S=\Big(\sum_{n=1}^N |f_n|^2\Big)^{1/2}.
\end{equation*}
If $\|S\|_4=0$, every $f_i$ vanishes almost everywhere, and there is nothing to prove for Theorem \ref{quartic-ortho-thm}.
Moreover, the case $N=1$ is trivial.

Now let us assume $\|S\|_4> 0$ and $N\geq2$.
Let $C(N)$ be the supremum of
\[
    \frac{\Big\|\sum_{n=1}^N f_n\Big\|_4}{\|S\|_4}
\]
over all families satisfying \eqref{ortho-hypo} with $\|S\|_4>0$.
Pointwise Cauchy--Schwarz gives $1\leq C(N)\leq\sqrt N$, so this
supremum is finite.  

We will repeatedly use the following observation:
both integrals in \eqref{ortho-hypo} remain zero and $S$ is unchanged
if $f_n$ is replaced by $c_n f_n$, where $|c_n|=1$.
Consequently, the same constant $C(N)$ applies to every such phase-changed family, and for
\[
    P_m(\theta)=\sum_{n=1}^m e(n\theta)f_n, \qquad 0\le m\le N,
\]
we have, for all $\theta\in[0,1]$,
\begin{equation}
\label{partial-sums}
    \|P_m(\theta)\|_4
    \le C(N)\Big\|\Big(\sum_{n=1}^m|f_n|^2\Big)^{1/2}\Big\|_4
    \le C(N)\|S\|_4.
\end{equation}
A shorter family is padded with zeros when applying $C(N)$.

\begin{samepage}
\begin{lemma}
Consider the truncated Hilbert kernel
\begin{equation}
\label{truncated-hilbert}
    K(\theta)=-2\sum_{n=1}^{N-1}\sin(2\pi n\theta).
\end{equation} 
Then, for every integer $n$ with $|n|<N$, we have
\begin{equation}
\label{hilbert}
    \int_0^{1}K(\theta)e(n\theta)\,d\theta=-\mathrm{i}\operatorname{sgn}(n),
\end{equation}
with $\operatorname{sgn}(0)=0$.
Moreover,
\[
    A_N=\int_0^{1}|K(\theta)|\,d\theta\ll1+\log N.
\]
\end{lemma}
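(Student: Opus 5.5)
The plan is to verify the two claims separately: the Fourier coefficient identity \eqref{hilbert} by direct orthogonality, and the $L^1$ bound by writing $K$ in closed form as a conjugate Dirichlet kernel.

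For \eqref{hilbert}, expand $\sin(2\pi m\theta)=\frac{1}{2\mathrm{i}}\big(e(m\theta)-e(-m\theta)\big)$. This gives
\[
K(\theta)=\mathrm{i}\sum_{m=1}^{N-1}\big(e(m\theta)-e(-m\theta)\big).
\]
Integrating against $e(n\theta)$ picks out the coefficient of $e(-n\theta)$ in $K$.
\begin{itemize}
\item If $1\le n\le N-1$, the surviving term is $-\mathrm{i}\,e(-n\theta)$ (from $m=n$), so the integral is $-\mathrm{i}$.
\item If $-(N-1)\le n\le -1$, the surviving term is $\mathrm{i}\,e(|n|\theta)$, so the integral is $\mathrm{i}$.
\item If $n=0$, the integral is $0$.
\end{itemize}
This is exactly $-\mathrm{i}\operatorname{sgn}(n)$. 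The restriction $|n|<N$ ensures the relevant frequency lies in the range of the sum.

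For the $L^1$ bound, I will use the closed form of the conjugate Dirichlet kernel:
\[
\sum_{m=1}^{N-1}\sin(2\pi m\theta)=\frac{\cos(\pi\theta)-\cos\big((2N-1)\pi\theta\big)}{2\sin(\pi\theta)},
\]
which follows by telescoping $2\sin(\pi\theta)\sin(2\pi m\theta)=\cos((2m-1)\pi\theta)-\cos((2m+1)\pi\theta)$. By the symmetry $\theta\mapsto1-\theta$, it suffices to integrate over $[0,1/2]$, and I split that interval at $\theta=1/N$.
\begin{itemize}
\item On $[0,1/N]$, use the trivial bound $|\sin(2\pi m\theta)|\le 2\pi m\theta$. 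This gives $|K(\theta)|\ll N^2\theta$, whose integral over this range is $O(1)$. Alternatively, $|K|\le 2N$ gives the same.
\item On $[1/N,1/2]$, the numerator is at most $2$ and $\sin(\pi\theta)\ge 2\theta$. Hence $|K(\theta)|\le 2/\theta$ up to constants, and the integral is $\ll \log N$.
\end{itemize}
Summing the two pieces gives $A_N\ll 1+\log N$.

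There is no real obstacle; the only care needed is the sign bookkeeping in the Fourier coefficient computation, matching the paper's convention $e(n\theta)$ and the factor $-2$ in \eqref{truncated-hilbert}.
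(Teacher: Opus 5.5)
Your proposal is correct and follows essentially the same route as the paper. The identity comes from orthogonality of the exponentials, and your signs check out. The $L^1$ bound comes from a closed form (your telescoping identity is the paper's geometric-series formula) giving $|K(\theta)|\ll\min\{N,1/|\sin(\pi\theta)|\}$, combined with symmetry about $1/2$; your split at $\theta=1/N$ is just the integral of $\min\{N,\theta^{-1}\}$ over $[0,1/2]$.
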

\end{samepage}
\begin{proof}

The identity \eqref{hilbert} follows from orthogonality of $e(n\theta)$.
The geometric-series formula gives
\begin{equation*}
    |K(\theta)|
    \le 2\min\left\{N,\frac{1}{|\sin(\pi\theta)|}\right\}
    \qquad (\theta\notin\mathbb{Z}).
\end{equation*}
Using symmetry about $1/2$ and $|\sin(\pi\theta)|\geq2\theta$
on $[0,1/2]$, we obtain
\begin{equation}
\label{eq:oqo-kernel-bound}
    A_N\ll\int_0^{1/2}\min\{N,\theta^{-1}\}\,d\theta\ll1+\log N. \qedhere
\end{equation} 
    
\end{proof}

\begin{samepage}
\begin{lemma}
Let $N\geq2$ and let $f_1,\ldots,f_N\in L^4(X,\mu)$ satisfy
\eqref{ortho-hypo}. Put
\begin{equation*}
    a_{ij}=\operatorname{Im}(f_i\overline{f_j})\quad(i<j),
    \qquad
    V=V(f_1,\ldots, f_N)=2\sum_{i<j}a_{ij}.
\end{equation*}
Then, for coefficients $c_j$ with $|c_j|=1$, 
\begin{equation}
    \|V(c_1f_1,\ldots,c_N f_N)\|_2\leq 2A_N C(N)\|S\|_4^2.
\end{equation}

\end{lemma}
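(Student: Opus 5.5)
Since the hypothesis \eqref{ortho-hypo}, the square function $S$ and the constant $C(N)$ are all unchanged when each $f_n$ is replaced by $c_nf_n$, it suffices to treat $c_j\equiv1$. The family $c_jf_j$ satisfies the same hypotheses, so the general case follows by applying the argument to it. The plan is to express $V$ through the truncated Hilbert kernel $K$ of \eqref{truncated-hilbert}, to remove one of the two factors of $P_N(\theta)$ using \eqref{ortho-hypo}, and then to apply Minkowski together with \eqref{partial-sums}.

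\textbf{Step 1: a Hilbert-kernel formula for $V$.} Using \eqref{hilbert} with $|i-j|<N$, we get pointwise in $x$
\[
\int_0^1 K(\theta)\,|P_N(\theta)|^2\,d\theta=\sum_{i,j}f_i\overline{f_j}\,(-\mathrm{i})\operatorname{sgn}(i-j)
=-2\sum_{i<j}\operatorname{Im}(f_i\overline{f_j})=-V .
\]
By Minkowski's inequality and \eqref{partial-sums}, this gives only $\|V\|_2\le A_NC(N)^2\|S\|_4^2$. The remaining work is to replace one factor $C(N)\|S\|_4$ by a trivial bound in terms of $S$.

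\textbf{Step 2: use the ordered superorthogonality in $\|V\|_2^2$.} Write $\|V\|_2^2=-\int_0^1K(\theta)\int_X V\,|P_N(\theta)|^2\,d\mu\,d\theta$. Then expand $V=\frac{1}{\mathrm{i}}\sum_{i<j}(f_i\overline{f_j}-\overline{f_i}f_j)$ inside $\int_X V\,|P_N(\theta)|^2$. Each term has the form $\int f_af_b\overline{f_cf_d}$ with a phase $e(\cdot\,\theta)$. When the four indices are distinct, \eqref{ortho-hypo} kills every term except those in the alternating pairing $C_{ijkl}$, whose two unconjugated indices interlace with the two conjugated ones. The aim is to show that the surviving terms, namely the interlacing quadruples together with the degenerate ones with repeated indices, can be regrouped as
\[
\int_X V\cdot 2\operatorname{Im}\Big(\sum_n \overline{f_n}\,e(-n\theta)\,Q_n(\theta)\Big)\,d\mu+\text{(diagonal terms)},
\]
where each $Q_n(\theta)$ is a phase-modulated partial sum of the type $P_m(\theta)$. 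This has to be combined with the identity from Step 1, so that the two Hilbert-kernel integrations collapse and only one factor of a partial sum remains. I expect this bookkeeping to be the \emph{main obstacle}. The first approach I would try is to order the four indices and use $i<j$ to decide which partial sum $P_{j-1}$ or tail appears, exactly as in $V=2\operatorname{Im}\sum_j\overline{f_j}P_{j-1}(0)$.

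\textbf{Step 3: conclude.} Once the identity from Step 2 holds, I would bound the result by Cauchy--Schwarz in $n$ and then Hölder:
\[
\|V\|_2^2\le 2\int_0^1|K(\theta)|\,\|V\|_2\,\Big\|S\cdot\max_m|P_m(\theta)|\Big\|_2\,d\theta .
\]
In the expected regrouping a single partial sum $P_m(\theta)$ appears rather than a maximum, so this gives $\|V\|_2^2\le 2A_N\|V\|_2\,\|S\|_4\,C(N)\|S\|_4$. Dividing by $\|V\|_2$ yields $\|V\|_2\le 2A_NC(N)\|S\|_4^2$. The factor $2$ comes from $V=2\sum a_{ij}$, and the diagonal terms with repeated indices are real and drop out of the imaginary part.
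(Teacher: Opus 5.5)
There is a genuine gap. Step 2, which you flag as the main obstacle, carries the whole argument, and as you set it up it cannot work.

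\textbf{Why Step 2 fails for fixed $\theta$.} For fixed $\theta$, the distinct-index terms of $\int_X V\,|P_N(\theta)|^2\,d\mu$ that survive \eqref{ortho-hypo} are the alternating ones $\int f_af_c\overline{f_bf_d}$ with $a<b<c<d$. They carry the coefficient $-\mathrm{i}\,[e((c-d)\theta)+e((c-b)\theta)-e((a-d)\theta)+e((a-b)\theta)]$, which equals $-2\mathrm{i}$ at $\theta=0$, and nothing in the hypotheses controls them. They cancel only after integration against $K$, by \eqref{hilbert}. That cancellation is lost once you put $|K(\theta)|$ inside the $\theta$-integral.

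\textbf{Why Step 3 fails.} Cauchy--Schwarz in $n$ applied to $\sum_n\overline{f_n}e(-n\theta)Q_n(\theta)$ gives $S\,(\sum_n|Q_n|^2)^{1/2}$, not $S\max_m|P_m|$. A maximal partial sum is not controlled by \eqref{partial-sums} in any case, since that bound treats each $P_m(\theta)$ separately. Also, the repeated-index terms do not drop out. Once the disjoint-pair terms integrate to zero, the shared-index terms are all that is left of $\|V\|_2^2$.

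\textbf{The missing idea.} Expand $\|V\|_2^2$ directly, before introducing any kernel. The identity $2\operatorname{Im}z\operatorname{Im}w=\operatorname{Re}(z\overline w-zw)$ gives, for $i<j<k<l$,
\[
a_{ij}a_{kl}+a_{ik}a_{jl}+a_{il}a_{jk}=\operatorname{Re}\bigl(f_if_l\overline{f_jf_k}-f_if_j\overline{f_kf_l}\bigr).
\]
So the alternating pairing cancels identically, pointwise, and the two nonalternating pairings integrate to $0$ by \eqref{ortho-hypo}.

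\textbf{Finishing from there.} Put $R_i=\sum_{j<i}f_j-\sum_{j>i}f_j$. The row sum $\sum_{j<i}a_{ji}+\sum_{j>i}a_{ij}$ equals $\operatorname{Im}(\overline{f_i}R_i)$, so
\[
\|V\|_2^2\le 4\int_X\sum_i|f_i|^2|R_i|^2\,d\mu .
\]
Only now use the kernel, and apply it to $R_i$ rather than to $V$:
\[
R_i=-\mathrm{i}\int_0^1K(\theta)e(-i\theta)P_N(\theta)\,d\theta .
\]
The $i$-dependence now sits in a unimodular factor multiplying the single function $P_N(\theta)$. This removes the $n$-dependent partial sums that blocked your Step 3. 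Minkowski, H\"older and \eqref{partial-sums} then give
\[
\Big(\int_X\sum_i|f_i|^2|R_i|^2\,d\mu\Big)^{1/2}\le\int_0^1|K(\theta)|\,\|S\|_4\|P_N(\theta)\|_4\,d\theta\le A_NC(N)\|S\|_4^2 ,
\]
and hence $\|V\|_2\le 2A_NC(N)\|S\|_4^2$.
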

\end{samepage}
\begin{proof}

The cancellation hypothesis and $S$ are unchanged by phase changes, so we may assume $c_j=1$.
Expanding the square and grouping terms according to whether their index pairs meet, we have
\begin{equation}
\nonumber
\begin{aligned}
    \Big(\sum_{i<j}a_{ij}\Big)^2
    &=\sum_i\Big(\sum_{j<i}a_{ji}+\sum_{j>i}a_{ij}\Big)^2
    -\sum_{i<j}a_{ij}^2\\
    &\quad+2\sum_{i<j<k<l}
    \bigl(a_{ij}a_{kl}+a_{ik}a_{jl}+a_{il}a_{jk}\bigr).
\end{aligned}
\end{equation} 

The key observation is the identity 
\[
    2\operatorname{Im}z\operatorname{Im}w=\operatorname{Re}(z\overline{w}-zw),
\]
which implies that, for $i<j<k<l$,
\begin{equation*}
    a_{ij}a_{kl}+a_{ik}a_{jl}+a_{il}a_{jk}
    =\operatorname{Re}\left(
    f_i f_l\overline{f_j f_k}-f_i f_j\overline{f_k f_l}
    \right).
\end{equation*}
By \eqref{ortho-hypo}, its integral is $0$.
Thus, we have
\begin{equation*}
    \|V\|_2^2
    =4\int\sum_i
    \Big(\sum_{j<i}a_{ji}+\sum_{j>i}a_{ij}\Big)^2d\mu
    -4\sum_{i<j}\|a_{ij}\|_2^2.
\end{equation*}
Since
\begin{equation*}
    \sum_{j<i}a_{ji}+\sum_{j>i}a_{ij}
    =\operatorname{Im}\Big[
    \overline{f_i}\Big(\sum_{j<i}f_j-\sum_{j>i}f_j\Big)
    \Big],
\end{equation*}
we conclude that
\begin{equation}
\label{eq:oqo-row-energy}
    \|V\|_2^2
    \le 4\int\sum_i|f_i|^2
    \Big|\sum_{j<i}f_j-\sum_{j>i}f_j\Big|^2d\mu.
\end{equation}

To estimate this weighted sum, \eqref{hilbert} gives the exact representation
\begin{equation}
\label{eq:oqo-row-kernel}
    \sum_{j<i}f_j-\sum_{j>i}f_j
    =-\mathrm{i}\int_0^1K(\theta)e(-i\theta)P_N(\theta)\,d\theta.
\end{equation}
Indeed, the coefficient of $f_j$ on the right is
$\operatorname{sgn}(i-j)$. Minkowski's inequality, followed by
H\"older's inequality, gives
\begin{align*}
    &\Big(\int_X\sum_i|f_i|^2
      \Big|\sum_{j<i}f_j-\sum_{j>i}f_j\Big|^2\,d\mu\Big)^{1/2}\\
    &\quad\leq\int_0^1|K(\theta)|
      \Big(\int_X\sum_i|f_i|^2|P_N(\theta)|^2\,d\mu\Big)^{1/2}
      \,d\theta=\int_0^1|K(\theta)|\,
      \|S\cdot P_N(\theta)\|_2\,d\theta\\
    &\quad\leq\int_0^1|K(\theta)|\,
      \|S\|_4\|P_N(\theta)\|_4\,d\theta
      \leq A_N C(N)\|S\|_4^2,
\end{align*}
where the last line uses \eqref{partial-sums}.
Combining this with \eqref{eq:oqo-row-energy}, we have proved
\begin{equation}
\label{imaginary-1}
    \|V\|_2\le2A_N C(N)\|S\|_4^2.  \qedhere
\end{equation}

\end{proof}

\begin{samepage}
\begin{corollary}
\label{ortho-cor}
Suppose \eqref{ortho-hypo} holds and, in addition,
\begin{equation}
\label{another-super-ortho}
    \int_X |f_i|^2f_j\overline{f_k} \,d\mu=0
    \qquad\text{for every }i\text{ and }j\ne k.
\end{equation}
Then, for coefficients $c_j$ with $|c_j|=1$,
\begin{equation}
\label{stronger-conclusion}
    \|V(c_1f_1,\ldots,c_N f_N)\|_2\leq 2\|S\|_4^2.
\end{equation}
\end{corollary}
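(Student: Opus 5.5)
We follow the proof of the preceding lemma up to \eqref{eq:oqo-row-energy} and then use \eqref{another-super-ortho} in place of the truncated Hilbert transform. As before, both \eqref{ortho-hypo} and \eqref{another-super-ortho} are unchanged when each $f_j$ is replaced by $c_jf_j$ with $|c_j|=1$, and $S$ is unchanged too. So we may assume $c_j=1$. The expansion of $(\sum_{i<j}a_{ij})^2$ and the identity for quadruples $i<j<k<l$ give, exactly as in the lemma,
\[
\|V\|_2^2\le 4\int_X\sum_i|f_i|^2\Big|\sum_{j<i}f_j-\sum_{j>i}f_j\Big|^2\,d\mu .
\]

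Now fix $i$ and put $\epsilon_j=\operatorname{sgn}(i-j)$, so $\epsilon_j\in\{\pm1\}$ for $j\ne i$ and $\epsilon_i=0$. Expanding the square,
\[
\Big|\sum_{j\neq i}\epsilon_j f_j\Big|^2=\sum_{j\ne i}|f_j|^2+\sum_{\substack{j\ne k\\ j,k\ne i}}\epsilon_j\epsilon_k f_j\overline{f_k}.
\]
After multiplying by $|f_i|^2$ and integrating, every off-diagonal term $\int_X|f_i|^2f_j\overline{f_k}\,d\mu$ with $j\ne k$ vanishes by \eqref{another-super-ortho}. Only the diagonal part survives, so
\[
\|V\|_2^2\le 4\int_X\sum_i\sum_{j\ne i}|f_i|^2|f_j|^2\,d\mu\le 4\int_X S^4\,d\mu=4\|S\|_4^4 .
\]
Taking square roots gives \eqref{stronger-conclusion}.

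There is no real obstacle in this argument. The only point to check is integrability: the terms $|f_i|^2f_j\overline{f_k}$ lie in $L^1$ by H\"older's inequality, since all $f_n\in L^4$. This is what justifies expanding the square and integrating term by term. It is also worth noting that in this argument the constants $A_N$ and $C(N)$ disappear entirely.
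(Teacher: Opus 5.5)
Your proposal is correct and follows essentially the same route as the paper: reduce to $c_j=1$ by phase invariance, reuse the bound $\|V\|_2^2\le 4\int_X\sum_i|f_i|^2\bigl|\sum_{j<i}f_j-\sum_{j>i}f_j\bigr|^2\,d\mu$ from the lemma, and kill the off-diagonal terms with \eqref{another-super-ortho}, leaving $\sum_i\sum_{j\ne i}\int_X|f_i|^2|f_j|^2\,d\mu\le\|S\|_4^4$. Your integrability remark via H\"older is a small justification the paper leaves implicit.
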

\end{samepage}

\begin{proof}
The additional hypothesis is also unchanged by phase changes, so we may take $c_j=1$. By \eqref{eq:oqo-row-energy} and \eqref{another-super-ortho}, all mixed terms in the squared row sum vanish upon integration. 
Hence
\begin{align*}
    \frac14\|V\|_2^2
    &\leq\int_X\sum_i|f_i|^2
       \Big|\sum_{j<i}f_j-\sum_{j>i}f_j\Big|^2\,d\mu\\
    &=\sum_i\sum_{j\ne i}\int_X|f_i|^2|f_j|^2\,d\mu
      \leq\int_X S^4\,d\mu.
\end{align*}
This gives \eqref{stronger-conclusion}.
\end{proof}

Corollary \ref{ortho-cor} will be useful to establish Theorem \ref{thm-cone}.

\bigskip 

\begin{proof}[Proof of Theorem \ref{quartic-ortho-thm}]

Note that
\[
    \Big|\sum_{n=1}^N f_n\Big|^2-S^2=2\sum_{i<j}\operatorname{Re}(f_i\overline{f_j}).
\]
To recover the real part from its imaginary part, for $\theta\in[0,1]$, define
\begin{equation*}
    V(\theta)=2\operatorname{Im}\sum_{i<j}
    e((i-j)\theta)f_i\overline{f_j}.
\end{equation*}
Applying \eqref{imaginary-1} to the phase-changed family $\{e(i\theta)f_i\}$ gives
\begin{equation*}
    \|V(\theta)\|_2\le2A_N C(N)\|S\|_4^2,
    \qquad \theta\in[0,1].
\end{equation*}
For $1\le n<N$ and $z\in\mathbb{C}$,
\eqref{hilbert} gives
\begin{equation*}
\int_0^{1}K(\theta)\,
    2\operatorname{Im}(e(-n\theta)z)\,d\theta
    =2\operatorname{Im}(\mathrm{i}z)=2\operatorname{Re}z.
\end{equation*}
Taking $n=j-i$ and $z=f_i\overline{f_j}$ and summing over $i<j$, we obtain
\begin{equation*}
    \Big|\sum_{n=1}^N f_n\Big|^2-S^2
    =\int_0^{1}K(\theta)V(\theta)\,d\theta.
\end{equation*}

Thus Minkowski's inequality gives
\begin{align*}
    \Big\|\Big|\sum_{n=1}^N f_n\Big|^2-S^2\Big\|_2
    \le\int_0^{1}|K(\theta)|\,
    \|V(\theta)\|_2\,d\theta\le2A_N^2 C(N)\|S\|_4^2,
\end{align*} 
which implies
\begin{equation*}
    \Big\|\sum_{n=1}^N f_n\Big\|_4^2
    \le\bigl(1+2A_N^2 C(N)\bigr)\|S\|_4^2.
\end{equation*}
Divide by $\|S\|_4^2$ and take the supremum over all admissible families. 
Since $C(N)$ is finite, this gives
\begin{equation*}
    C(N)^2\le1+2A_N^2 C(N).
\end{equation*}
Using $C(N)\ge1$ and \eqref{eq:oqo-kernel-bound}, we conclude that
\begin{equation*}
    C(N)\le C(N)^{-1}+2A_N^2
    \ll(1+\log N)^2.
\end{equation*}
Therefore
\begin{equation*}
    \Big\|\sum_{n=1}^N f_n\Big\|_4
    \ll(1+\log N)^2
    \Big\|\Big(\sum_{n=1}^N|f_n|^2\Big)^{1/2}\Big\|_4.
    \qedhere
\end{equation*}
\end{proof}

\bigskip 

\section{The geometry for the light cone}
\label{sec:geometry}

\subsection{Determinants for three vectors from the cone}

Recall \eqref{eq:radial-support}. 
For the support calculation, it is useful to keep the thickness parameter independent of the interval
widths. 
Fix $h>0$ and write
\[
    \xi=r(1,t,t^2+\e),\qquad
    \e=u/r,\qquad |\e|\leq h^2.
\]
For Corollary~\ref{main-cor}, we will take $h=w_*$.

The intervals carry their natural increasing order: we define $I_n\prec I_m$ if $\max I_n\leq\min I_m$, equivalently $n<m$.
Take three intervals $I_1\prec I_2\prec I_3$ with gaps of at least $2h$, and choose
\[
    \xi_j=r_j(1,t_j,t_j^2+\e_j),\qquad
    t_j\in I_j,\quad r_j>0,\quad |\e_j|\leq h^2.
\]
Set $d_1=t_2-t_1$ and $d_2=t_3-t_2$. Then
\begin{equation}
\label{eq:determinant}
\begin{aligned}
    D_3&:=\det\begin{pmatrix}
       1&t_1&t_1^2+\e_1\\
       1&t_2&t_2^2+\e_2\\
       1&t_3&t_3^2+\e_3
       \end{pmatrix}\\
    &=d_1d_2(d_1+d_2)+\e_1d_2-\e_2(d_1+d_2)+\e_3d_1.
\end{aligned}
\end{equation}
For a general bound $|\e_j|\leq\tau$, we have
\begin{equation}
\label{eq:minimum-determinant}
    \min_{|\e_j|\leq\tau}D_3=(d_1+d_2)(d_1d_2-2\tau).
\end{equation}
In our case, $d_1,d_2\geq2h$ and $\tau=h^2$. Hence
\[
    D_3\geq(d_1+d_2)(d_1d_2-2h^2)
    \geq2h^2(d_1+d_2)>0.
\]
Thus
\begin{equation}
\label{positivity}
    \det[\xi_1\ \xi_2\ \xi_3]=r_1r_2r_3D_3>0.
\end{equation}
Note that \eqref{positivity} is uniform in the radial parameters. 
No upper bound on the $r_j$ is required.

\medskip 

\subsection{Four ordered points: two forbidden pairings and one allowed pairing}

Given four intervals $I_1\prec I_2\prec I_3\prec I_4$ with gaps of at least $2h$, choose vectors $\xi_j$ from the corresponding thickened sectors given in \eqref{eq:radial-support}.
For example, the thickened sector of $I_1$ is $\{\Phi(r,t,u):r\geq a,\ t\in I_1,\ |u|\leq r w_*^2\}$.
For $s=1,2,3,4$, let $\Delta_s=\det[\xi_i\ \xi_j\ \xi_k]$, where $i<j<k$ are the three indices different from $s$. 
By \eqref{positivity}, every $\Delta_s$ is positive. 
Notice that, with $\xi_j=(x_j,y_j,z_j)$,
\[
    \det\begin{pmatrix}
      x_1&x_2&x_3&x_4\\
      x_1&x_2&x_3&x_4\\
      y_1&y_2&y_3&y_4\\
      z_1&z_2&z_3&z_4
    \end{pmatrix}
    =x_1\Delta_1-x_2\Delta_2+x_3\Delta_3-x_4\Delta_4=0.
\]
The same holds when the first row is replaced by
$(y_1,y_2,y_3,y_4)$ or $(z_1,z_2,z_3,z_4)$. Therefore,
\begin{equation}
\label{alternating}
    \Delta_1\xi_1-\Delta_2\xi_2+\Delta_3\xi_3-\Delta_4\xi_4=0.
\end{equation}

By \eqref{positivity}, the matrix $[\xi_1\ \xi_2\ \xi_3\ \xi_4]$ has rank three, so its kernel is one-dimensional and is spanned by the coefficient vector in \eqref{alternating}. 
Thus every nonzero real relation $\sum_j c_j\xi_j=0$ has alternating coefficient signs.
In particular,
\[
    \xi_1+\xi_2=\xi_3+\xi_4,
    \qquad
    \xi_1+\xi_4=\xi_2+\xi_3
\]
are impossible: their signs are $(+,+,-,-)$ and $(+,-,-,+)$.
The alternating relation $\xi_1+\xi_3=\xi_2+\xi_4$ is not excluded.
See Figure~\ref{fig:pairings}.

\begin{figure}[htbp]
\centering
\resizebox{.98\textwidth}{!}{%
\begin{tikzpicture}[x=1cm,y=1cm,>=Stealth,
  line cap=round,line join=round,
  figlabel/.style={font=\small},
  tinylab/.style={font=\footnotesize},
  maincurve/.style={thick,capblue}]
\foreach \xshift/\kind in {0/1,4.1/2,8.2/3}{
 \begin{scope}[shift={(\xshift,0)}]
  \draw[gray!65,thin] plot[domain=-.1:2.8,samples=50]
     (\x,{.25+.63*(\x-1.35)*(\x-1.35)});
  \coordinate (q1) at (0,1.398);
  \coordinate (q2) at (.90,.378);
  \coordinate (q3) at (1.80,.378);
  \coordinate (q4) at (2.70,1.398);
  \ifnum\kind=1
    \draw[capblue,line width=1.3pt] (q1)--(q2);
    \draw[capteal,line width=1.3pt,densely dashed] (q3)--(q4);
    \node[tinylab] at (1.35,2.05) {$(1,2)\mid(3,4)$};
    \node[tinylab] at (1.35,-.45) {$(+,+,-,-)$: forbidden};
  \fi
  \ifnum\kind=2
    \draw[capblue,line width=1.3pt] (q1)--(q4);
    \draw[capteal,line width=1.3pt,densely dashed] (q2)--(q3);
    \node[tinylab] at (1.35,2.05) {$(1,4)\mid(2,3)$};
    \node[tinylab] at (1.35,-.45) {$(+,-,-,+)$: forbidden};
  \fi
  \ifnum\kind=3
    \draw[capblue,line width=1.3pt] (q1)--(q3);
    \draw[capteal,line width=1.3pt,densely dashed] (q2)--(q4);
    \node[tinylab] at (1.35,2.05) {$(1,3)\mid(2,4)$};
    \node[tinylab] at (1.35,-.45) {$(+,-,+,-)$: not excluded};
  \fi
  \foreach \j in {1,2,3,4}{\fill[capblue] (q\j) circle (2pt);}
  \node[tinylab,above left=1pt] at (q1) {$1$};
  \node[tinylab,below left=1pt] at (q2) {$2$};
  \node[tinylab,below right=1pt] at (q3) {$3$};
  \node[tinylab,above right=1pt] at (q4) {$4$};
 \end{scope}
}
\end{tikzpicture}}
\caption{The three pairings in the normalized cone section
$(1,t,t^2)$. A positive two-against-two cone relation forces the two
corresponding chords in the $(t,t^2)$-plane to meet. Only the
alternating chords can do so. The determinant calculation proves
that the same obstruction survives the allowed thickening after
separating the intervals.}
\label{fig:pairings}
\end{figure}

The above discussion implies the following lemma.
\begin{samepage}
\begin{lemma}
\label{support-lem}
Let $a,h>0$, and let $I_1\prec\cdots\prec I_N$ have gaps of at
least $2h$. Suppose that $g_n\in L^4(\ZR^3)$ and
\[
    \supp(\wh g_n)\subset
    \{\Phi(r,t,u):r\geq a,\ t\in I_n,\ |u|\leq rh^2\}.
\]
Then, for every $i<j<k<l$,
\[
    \int_{\ZR^3}g_i g_j\overline{g_k g_l}\,dx=0,
    \qquad
    \int_{\ZR^3}g_i g_l\overline{g_j g_k}\,dx=0.
\]
\end{lemma}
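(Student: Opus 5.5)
The plan is to reduce each vanishing integral to a statement about Fourier supports and then invoke the alternating-sign property of four ordered cone vectors established just above.

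By Plancherel, writing $\overline{g_k g_l}$ via its Fourier transform $\overline{\wh g_k}(-\cdot)*\overline{\wh g_l}(-\cdot)$, we have
\[
\int_{\ZR^3} g_i g_j\overline{g_k g_l}\,dx
=\big\langle \wh g_i*\wh g_j,\ \wh g_k*\wh g_l\big\rangle .
\]
The convolution $\wh g_i*\wh g_j$ is supported in the sumset $\Sigma_i+\Sigma_j$, where $\Sigma_n$ denotes the thickened sector in the hypothesis. The integral vanishes as soon as $(\Sigma_i+\Sigma_j)\cap(\Sigma_k+\Sigma_l)=\emptyset$, and likewise for the pairing $(i,l)\mid(j,k)$.

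First I establish disjointness of the sumsets. Suppose $\xi_i+\xi_j=\xi_k+\xi_l$ with $\xi_n\in\Sigma_n$. Then $\xi_i+\xi_j-\xi_k-\xi_l=0$ is a nonzero real linear relation among four vectors drawn from sectors $I_i\prec I_j\prec I_k\prec I_l$, which have gaps at least $2h$ and thickness $|u|\le rh^2$. By the discussion leading to \eqref{alternating}, every $\Delta_s>0$ by \eqref{positivity}, the kernel of $[\xi_i\ \xi_j\ \xi_k\ \xi_l]$ is spanned by $(\Delta_1,-\Delta_2,\Delta_3,-\Delta_4)$, and so any nonzero relation has strictly alternating signs. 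The sign patterns $(+,+,-,-)$ and $(+,-,-,+)$ are therefore impossible. Note that $r\ge a>0$ guarantees the vectors are nonzero, and that the positivity in \eqref{positivity} only needs $r_j>0$, so no upper bound on $r$ is required.

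The main obstacle is the analytic justification for general $L^4$ functions, since disjoint supports of distributions need not be separated by a positive distance. I would handle it as follows.

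1. The sumsets are closed. Points with $r\to\infty$ in one summand cannot be cancelled, because the first coordinate of every vector is positive and equal to $r$. Hence a bounded sum forces bounded $r$'s, giving compactness locally.
2. The products $g_ig_j$ and $\overline{g_kg_l}$ lie in $L^2$ by Hölder. So the integral is the $L^2$ pairing $\langle g_ig_j, g_kg_l\rangle = \langle \widehat{g_ig_j},\widehat{g_kg_l}\rangle$, and $\widehat{g_ig_j}$ is an $L^2$ function supported in the closure of $\Sigma_i+\Sigma_j$.
3. To confirm that support claim, I would mollify: replace $g_n$ by $g_n\cdot\check\phi(\epsilon\cdot)$, whose Fourier support lies in an $\epsilon$-neighborhood of $\Sigma_n$, and then take limits in $L^4$.
4. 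Disjointness of the closed sumsets gives $\widehat{g_ig_j}\cdot\widehat{g_kg_l}=0$ almost everywhere, so the integral vanishes.

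If the closure issue at infinity is delicate, I would also note that the first coordinates add, so on each slab $\{x_1\le R\}$ only finitely bounded pieces interact. A cutoff in $x_1$ on the Fourier side then suffices, letting $R\to\infty$.
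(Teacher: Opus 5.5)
Your proposal is correct and follows the paper's proof: the alternating-sign relation \eqref{alternating} rules out the patterns $(+,+,-,-)$ and $(+,-,-,+)$, so the relevant Minkowski sums of sectors are disjoint, and Plancherel finishes the argument after an approximation. The only difference is the regularization. You cut off in physical space, which fattens the Fourier supports by $\epsilon$ and therefore needs your closedness argument for the sumsets; that argument is valid, since the first coordinates are at least $a$ and add. The paper instead multiplies $\wh g_n$ by $\chi(\xi/R)$, which does not enlarge the supports and makes them compact, so no closure issue arises.
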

\end{samepage}

\begin{proof}
The forbidden relations show that the Fourier supports of $g_i g_j$ and $g_k g_l$ are disjoint, and likewise for $g_i g_l$ and $g_j g_k$.
Plancherel's theorem gives the conclusion.
To justify this for general $L^4$ functions, first multiply each $\wh g_n$ by $\chi(\xi/R)$, where $\chi\in C_c^\infty(\ZR^3)$ equals $1$ near the origin. 
This does not enlarge Fourier support.
The cutoff functions have compact Fourier supports, and the supports of their products lie in the corresponding compact Minkowski sums. 
The cutoff functions converge to $g_n$ in $L^4$ as $R\longrightarrow\infty$, and their products converge in $L^2$.
Passing to the limit proves the stated formulation.
\end{proof}

\medskip 

\subsection{Additional geometry for equal-size caps}
\label{sec:equal-cap-geometry}

Let $I_1\prec\cdots\prec I_M$ be intervals of the same length $w>0$ which are $\geq10w$ apart.
Write
\begin{equation}
\label{Ij-Omegaj}
    I_j=[t_j-w/2,t_j+w/2],\quad
    \Omega_j=\{\Phi(r,t,u):1\leq r\leq2,\ t\in I_j,
    \ |u|\leq w^2\}.
\end{equation}
In particular, $t_{j+1}-t_j\geq11w$.
The centers $\{t_j\}$ need not be equally spaced in this subsection. 
Define
\[
    Q(\xi)=\xi_1\xi_3-\xi_2^2.
\]
For $\xi=\Phi(r,t,u)$ and $\eta=\Phi(r',t',u')$, direct calculation gives
\begin{equation}
\label{cone-q-identity}
    Q(\xi-\eta)=-rr'(t-t')^2+(r-r')(u-u').
\end{equation}

\begin{samepage}
\begin{lemma}
\label{cone-difference-lem}
Let $I_j$ and $\Omega_j$ be given in \eqref{Ij-Omegaj}.
For every $1\leq i\leq M$, we have
\begin{equation}
\label{cone-same-cap-difference}
    \Omega_i-\Omega_i\subset\{\xi:|Q(\xi)|\leq6w^2\}.
\end{equation}
For $1\leq i,j\leq M$ with $i\ne j$, we have
\begin{equation}
\label{cone-difference-comparison}
    \Omega_i-\Omega_j\subset
    \Big\{\xi:\frac12|t_i-t_j|^2\leq -Q(\xi)
    \leq5|t_i-t_j|^2\Big\}.
\end{equation}
Moreover,
\begin{equation}
\label{cone-repeated-separation}
    (\Omega_i-\Omega_i)\cap(\Omega_j-\Omega_k)=\varnothing
    \qquad(1\leq i,j,k\leq M,\ j\ne k).
\end{equation}
\end{lemma}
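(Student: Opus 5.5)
All three claims follow from the identity \eqref{cone-q-identity}, applied to $\xi=\Phi(r,t,u)\in\Omega_i$ and $\eta=\Phi(r',t',u')\in\Omega_j$. Throughout we use $1\le r,r'\le2$, $|r-r'|\le1$ and $|u|,|u'|\le w^2$, so that $|u-u'|\le2w^2$.

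\emph{Same-cap differences.} Here $t,t'\in I_i$, so $|t-t'|\le w$. The identity gives
\[
|Q(\xi-\eta)|\le rr'(t-t')^2+|r-r'||u-u'|\le 4w^2+2w^2=6w^2,
\]
which is \eqref{cone-same-cap-difference}.

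\emph{Differences of distinct caps.} Set $D=|t_i-t_j|$. The separation hypothesis gives $D\ge11w$, so $w\le D/11$. Since $t\in I_i$ and $t'\in I_j$,
\[
D-w\le|t-t'|\le D+w .
\]
By \eqref{cone-q-identity},
\[
-Q(\xi-\eta)=rr'(t-t')^2-(r-r')(u-u').
\]
For the upper bound,
\[
-Q\le4(D+w)^2+2w^2\le\bigl(4(12/11)^2+2/121\bigr)D^2<5D^2 .
\]
For the lower bound, $rr'\ge1$ gives
\[
-Q\ge(D-w)^2-2w^2\ge\bigl((10/11)^2-2/121\bigr)D^2=(98/121)D^2\ge\tfrac12D^2 .
\]
This proves \eqref{cone-difference-comparison}.

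\emph{Disjointness \eqref{cone-repeated-separation}.} Suppose $\zeta$ lies in both $\Omega_i-\Omega_i$ and $\Omega_j-\Omega_k$ with $j\ne k$. From the first membership, $|Q(\zeta)|\le6w^2$. From the second, via \eqref{cone-difference-comparison},
\[
-Q(\zeta)\ge\tfrac12|t_j-t_k|^2\ge\tfrac12(11w)^2=60.5\,w^2>6w^2 ,
\]
which is a contradiction.

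The only step needing care is the constant bookkeeping in the distinct-cap case. The separation $t_{j+1}-t_j\ge11w$ is exactly what makes the error terms $w$ and $2w^2$ negligible relative to $D$. All remaining steps are direct substitutions.
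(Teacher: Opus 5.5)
Your proof is correct and follows the paper's argument. Both use the identity \eqref{cone-q-identity} together with $1\le rr'\le4$ and $|(r-r')(u-u')|\le2w^2$, and both handle the distinct-cap constants with $w\le d/11$. The only cosmetic difference is in \eqref{cone-repeated-separation}: you deduce it from \eqref{cone-difference-comparison} and get $-Q\ge 60.5\,w^2$, whereas the paper applies the parameter separation $|t-t'|\ge10w$ directly and gets $-Q\ge98w^2$; either bound exceeds $6w^2$.
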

\end{samepage}

\begin{proof}
Since $1\leq r,r'\leq2$ and $|u|,|u'|\leq w^2$, we have
\[
    1\leq rr'\leq4,\qquad |(r-r')(u-u')|\leq2w^2.
\]
If $\xi,\eta\in\Omega_i$, then $|t-t'|\leq w$.
Hence \eqref{cone-q-identity} gives
\[
    |Q(\xi-\eta)|\leq4w^2+2w^2=6w^2,
\]
which proves \eqref{cone-same-cap-difference}.

If $\xi\in\Omega_j$ and $\eta\in\Omega_k$ with $j\ne k$, then $|t-t'|\geq10w$. 
Therefore,
\[
    -Q(\xi-\eta)\geq(t-t')^2-2w^2\geq98w^2.
\]
Since $98>6$, this proves \eqref{cone-repeated-separation}, including
when $i=j$ or $i=k$.

Finally, suppose $\xi\in\Omega_i$, $\eta\in\Omega_j$, and $i\ne j$.
Put $d=|t_i-t_j|\geq11w$. Since
\[
    d-w\leq|t-t'|\leq d+w,
\]
we obtain
\begin{align*}
    -Q(\xi-\eta)&\geq(d-w)^2-2w^2\geq\frac12d^2,\\
    -Q(\xi-\eta)&\leq4(d+w)^2+2w^2\leq5d^2.
\end{align*}
The last inequalities use $w\leq d/11$.
This proves \eqref{cone-difference-comparison}.
\end{proof} 

\begin{samepage}
\begin{lemma}
\label{cone-scale-geometry}
Let $I_1\prec\cdots\prec I_M$ be intervals of the same length $w>0$ which are $\geq10w$ apart.
Let $\Omega_j$ be given in \eqref{Ij-Omegaj}.
Fix $D>0$. 
For integers $s\geq0$, let
\begin{equation}
\label{cone-gap-regions}
    \Sigma_s=
    \bigcup_{\substack{1\leq i<j\leq M\\
        2^{s-1}D\leq t_j-t_i\leq2^{s+1}D}}
    (\Omega_i-\Omega_j).
\end{equation}
Then $\Sigma_s\cap\Sigma_{s'}=\varnothing$ whenever $|s-s'|\geq4$.
Consequently,
\begin{equation}
\label{cone-gap-overlap}
    \sup_{\xi\in\ZR^3}\sum_{s\geq0}\mathbf{1}_{\Sigma_s}(\xi)
    \leq4.
\end{equation}
\end{lemma}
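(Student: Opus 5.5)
The plan is to use the comparison \eqref{cone-difference-comparison} of Lemma~\ref{cone-difference-lem}, which pins down $-Q$ on each difference set $\Omega_i-\Omega_j$ up to a factor of ten. If $\xi\in\Sigma_s$, then $\xi\in\Omega_i-\Omega_j$ for some $i<j$ with $2^{s-1}D\le t_j-t_i\le 2^{s+1}D$. By \eqref{cone-difference-comparison}, setting $d=t_j-t_i$, we get
\[
    \tfrac12 d^2\le -Q(\xi)\le 5d^2 .
\]
Combining this with the range of $d$ gives
\[
    2^{2s-3}D^2\le -Q(\xi)\le 5\cdot 2^{2s+2}D^2 .
\]
So membership in $\Sigma_s$ forces $-Q(\xi)$ into the interval $J_s=[2^{2s-3}D^2,\,5\cdot2^{2s+2}D^2]$. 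The point is that $J_s$ is determined by $s$ alone, not by which pair $(i,j)$ produced $\xi$.

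Next, I show the intervals $J_s$ and $J_{s'}$ are disjoint once $|s-s'|\ge4$. Say $s'\ge s+4$. The left endpoint of $J_{s'}$ is at least $2^{2s+5}D^2$, while the right endpoint of $J_s$ is $5\cdot2^{2s+2}D^2=20\cdot 2^{2s}D^2$. Since $2^{2s+5}D^2=32\cdot 2^{2s}D^2>20\cdot2^{2s}D^2$, the intervals do not meet. A point $\xi$ in both $\Sigma_s$ and $\Sigma_{s'}$ would have $-Q(\xi)$ in both, which is impossible. Hence $\Sigma_s\cap\Sigma_{s'}=\varnothing$.

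For \eqref{cone-gap-overlap}, fix $\xi$ and let $S_\xi=\{s\ge 0:\xi\in\Sigma_s\}$. By the previous step, any two elements of $S_\xi$ differ by at most $3$. So $S_\xi$ lies in $\{s_0,s_0+1,s_0+2,s_0+3\}$, where $s_0=\min S_\xi$, and therefore has at most four elements.

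There is no real obstacle here. The only things to check are the constant comparison ($32>20$) and that \eqref{cone-difference-comparison} applies: it requires $i\ne j$, which holds since $i<j$, and the separation hypothesis $\ge10w$, which is given.
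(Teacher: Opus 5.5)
Your proposal is correct and matches the paper's proof. Both use \eqref{cone-difference-comparison} to confine $-Q(\xi)$ on $\Sigma_s$ to $[\frac18 D^2 4^s,\,20D^2 4^s]$ and then compare $32>20$. You also spell out the short step from pairwise disjointness for $|s-s'|\geq4$ to the overlap bound of four, which the paper leaves implicit.
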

\end{samepage}

\begin{proof}
By \eqref{cone-difference-comparison},
\begin{equation}
\nonumber
    \Sigma_s\subset E_s
    :=\Big\{\xi:\frac18D^2 4^s
    \leq -Q(\xi)\leq20D^2 4^s\Big\}.
\end{equation}
If $s'\geq s+4$, then the lower endpoint of the interval defining $E_{s'}$ is at least $32D^2 4^s$, which exceeds the upper endpoint $20D^2 4^s$ for $E_s$. 
Thus $E_s\cap E_{s'}=\varnothing$. 
This proves \eqref{cone-gap-overlap}.
\end{proof}

\bigskip 

\section{Proof of the reverse square function estimates for the cone}
\label{sec:cone-estimates}

\subsection{Proof of Corollary \ref{main-cor}}
\label{sec:finish}

We now combine the geometry in Section~\ref{sec:geometry} with Theorem~\ref{quartic-ortho-thm}. 
Partition the original indices modulo $3$. 
Two successive intervals in one class have two intervening intervals, each of width at least $w_*$.
Hence their gap is at least $2w_*$. 
Lemma~\ref{support-lem}, with $h=w_*$, verifies
\eqref{ortho-hypo} in each class.

For the three residue classes $\mathcal R_c$, put
\[
    G_c=\sum_{n\in\mathcal R_c}g_n,
    \qquad
    S_c=\Big(\sum_{n\in\mathcal R_c}|g_n|^2\Big)^{1/2}.
\]
Theorem~\ref{quartic-ortho-thm} and the triangle inequality give
\begin{align*}
    \Big\|\sum_{n=1}^N g_n\Big\|_4
    &\leq\sum_{c=1}^3\|G_c\|_4
      \ll(1+\log N)^2\sum_{c=1}^3\|S_c\|_4\\
    &\ll(1+\log N)^2
      \Big\|\Big(\sum_{n=1}^N|g_n|^2\Big)^{1/2}\Big\|_4.
\end{align*}
This proves Corollary~\ref{main-cor}.

\medskip 

\subsection{Auxiliary lemmas for equal-size caps}
\label{sec:cone-auxiliary}

Consider a separated family of $M$ intervals and caps as in \eqref{Ij-Omegaj}, and let $f_1,\ldots,f_M$ be Schwartz functions whose Fourier transforms are supported in these caps. 
Put
\[
    S=\Big(\sum_{j=1}^M|f_j|^2\Big)^{1/2},
    \qquad
    U=\sum_{1\leq i<j\leq M}f_i\overline{f_j},
\]
and let $L=1+\lceil\log_2 M\rceil$. 
Thus $L$ is an integer and $L\ll1+\log M$.

The next lemma is a standard partition of unity for the positive frequencies of the truncated Hilbert transform \eqref{truncated-hilbert}.
We include its proof to record that the $L^1$ bound is independent of the scale.

\begin{samepage}
\begin{lemma}
\label{cone-kernel-lem}
There exists a nonnegative $\psi\in C_c^\infty(\ZR)$ supported in $[1/2,2]$ such that
\begin{equation}
\label{cone-gap-partition}
    \sum_{s=0}^{L-1}\psi(2^{-s}k)=1
    \qquad(1\leq k\leq M,\ k\in\ZZ).
\end{equation}
For every integer $s\geq0$, let
\begin{equation}
\nonumber
    K_s(\theta)=\sum_{n\in\ZZ}\psi(2^{-s}n)e(n\theta).
\end{equation}
Then, independently of $s$,
\begin{equation}
\nonumber
    \int_0^1|K_s(\theta)|\,d\theta\ll1.
\end{equation}
\end{lemma}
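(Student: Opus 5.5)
Our plan is to build $\psi$ from a standard dyadic partition of unity and to bound $K_s$ through Poisson summation.

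For the construction, fix a nonnegative $\phi\in C_c^\infty(\ZR)$ with $\phi=1$ on $[0,1]$ and $\supp\phi\subset(-\infty,2]$, nonincreasing on $[0,\infty)$, with $\phi(x)=0$ for $x\geq 2$; also fix $\phi(x)=1$ for $x\leq1$. Set
\[
\psi(x)=\phi(x)-\phi(2x).
\]
Then $\psi\geq0$ on $[0,\infty)$, since $\phi$ is nonincreasing there. It vanishes for $x\leq1/2$, because both terms equal $1$, and for $x\geq2$, because both vanish. So $\psi$ is supported in $[1/2,2]$.

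The sum telescopes:
\[
\sum_{s=0}^{L-1}\psi(2^{-s}k)=\phi(2^{-(L-1)}k)-\phi(2k).
\]
For $1\leq k\leq M$ we have $2k\geq2$, so $\phi(2k)=0$. Also $2^{L-1}\geq M\geq k$, so $2^{-(L-1)}k\leq1$ and $\phi(2^{-(L-1)}k)=1$. This gives \eqref{cone-gap-partition}. Since $\psi$ vanishes on $(-\infty,1/2]$, it is smooth with compact support in $[1/2,2]$.

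For the kernel bound, $K_s$ is a trigonometric polynomial and $\theta\mapsto K_s(\theta)$ is $1$-periodic. Apply Poisson summation to $n\mapsto\psi(2^{-s}n)e(n\theta)$:
\[
K_s(\theta)=2^s\sum_{m\in\ZZ}\wh\psi\bigl(2^s(m-\theta)\bigr),
\]
where $\wh\psi$ is Schwartz. This is the periodization of $2^s\wh\psi(2^s\cdot)$, up to a reflection. Therefore
\[
\int_0^1|K_s|\,d\theta\leq\int_\ZR 2^s|\wh\psi(2^s x)|\,dx=\|\wh\psi\|_{L^1},
\]
which is a finite constant independent of $s$.

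The only point needing care is making sure $\psi$ is nonnegative and exactly supported in $[1/2,2]$, and this is handled by the monotone choice of $\phi$. The Poisson step is routine because $\psi$ is Schwartz, so the series converges absolutely and uniformly.
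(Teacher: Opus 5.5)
Your proof is correct and matches the paper's argument essentially step for step: you build the telescoping dyadic partition $\psi(x)=\phi(x)-\phi(2x)$ from a monotone cutoff, then use Poisson summation to get $\int_0^1|K_s|\,d\theta\le\|\wh\psi\|_{L^1}$ uniformly in $s$. The only slip is cosmetic. A $\phi$ equal to $1$ on $(-\infty,1]$ is smooth but not in $C_c^\infty(\ZR)$; the paper instead uses an even cutoff and sets $\psi=0$ on $(-\infty,0]$, and this difference does not affect the argument.
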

\end{samepage}

\begin{proof}
Choose a smooth even cutoff $\eta$ that equals $1$ on $[-1,1]$, vanishes outside $[-2,2]$, and is nonincreasing on $[0,\infty)$.
For $x>0$, put $\psi(x)=\eta(x)-\eta(2x)$, and put $\psi(x)=0$ for $x\leq0$. 
Then $\psi$ is smooth, nonnegative, and supported in
$[1/2,2]$. 
The sum in \eqref{cone-gap-partition} telescopes to
\[
    \eta(2^{-(L-1)}k)-\eta(2k)=1.
\]
Writing $\check\psi(y)=\int_{\ZR}\psi(v)e(vy)\,dv$, Poisson summation gives
\[
    K_s(\theta)=2^s\sum_{m\in\ZZ}
      \check\psi\big(2^s(\theta+m)\big).
\]
Consequently,
\[
    \int_0^1|K_s(\theta)|\,d\theta
    \leq\sum_{m\in\ZZ}\int_0^1
      2^s\big|\check\psi\big(2^s(\theta+m)\big)\big|\,d\theta=\|\check\psi\|_{L^1(\ZR)}<\infty.  \qedhere
\]
\end{proof}

\medskip 

\begin{lemma}
\label{cone-dyadic-lem}
Let $I_1\prec\cdots\prec I_M$ be intervals of the same length $w>0$ which are $\geq10w$ apart, and recall \eqref{Ij-Omegaj}.
Take $\psi$ as in Lemma \ref{cone-kernel-lem}.
For integers $0\leq s<L$, define
\begin{equation}
\nonumber
    U_s=\sum_{1\leq i<j\leq M}
      \psi\Big(\frac{j-i}{2^s}\Big)f_i\overline{f_j}.
\end{equation}
Then $U=\sum_{s=0}^{L-1}U_s$ and
\begin{equation}
\nonumber
    \|U_s\|_2\ll\|S\|_4^2.
\end{equation}
\end{lemma}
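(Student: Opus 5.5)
The identity $U=\sum_{s=0}^{L-1}U_s$ follows from \eqref{cone-gap-partition}: every pair $i<j$ has $1\leq j-i\leq M-1$, and the weights $\psi(2^{-s}(j-i))$ sum to $1$ over $0\leq s<L$. For the $L^2$ bound, I plan to avoid any $L^4$ estimate for $\sum f_j$, which would be circular. Instead, I will express $U_s$ as a $K_s$-average of the imaginary-part quantities $V$ from Section~\ref{proof-of-main-section}, and control these by Corollary~\ref{ortho-cor} with an absolute constant.

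\emph{Step 1: verify the hypotheses of Corollary~\ref{ortho-cor}.} The intervals are $\geq10w\geq2w$ apart and the caps satisfy $|u|\leq w^2\leq rw^2$ with $r\geq1$. So Lemma~\ref{support-lem} with $h=w$, $a=1$ gives \eqref{ortho-hypo} for $f_1,\ldots,f_M$. For \eqref{another-super-ortho}, $|f_i|^2$ has Fourier support in $\Omega_i-\Omega_i$, and $f_j\overline{f_k}$ has Fourier support in $\Omega_j-\Omega_k$. By \eqref{cone-repeated-separation} these sets are disjoint when $j\neq k$, so Plancherel gives $\int|f_i|^2f_j\overline{f_k}=0$. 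The Schwartz assumption makes this rigorous since the supports are compact. Hence for every $\theta$, applying the corollary to the phase-changed family $\{e(i\theta)f_i\}$ gives
\[
\|V(\theta)\|_2\leq2\|S\|_4^2,\qquad V(\theta):=2\operatorname{Im}\sum_{i<j}e((i-j)\theta)f_i\overline{f_j}.
\]

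\emph{Step 2: Fourier inversion in $\theta$.} Put $z_{ij}=f_i\overline{f_j}$. Then
\[
V(\theta)=-\mathrm{i}\sum_{i<j}\bigl(e((i-j)\theta)z_{ij}-e((j-i)\theta)\overline{z_{ij}}\bigr).
\]
For $1\leq m\leq M-1$, only the terms with $j-i=m$ in the first group survive in
\[
\int_0^1V(\theta)e(m\theta)\,d\theta=-\mathrm{i}\sum_{j-i=m}z_{ij},
\]
because the conjugate group carries frequencies $-(j-i)-m\neq0$. Since $\psi$ vanishes on $(-\infty,1/2)$, the only nonzero coefficients of $K_s$ with $|n|<M$ have $1\leq n\leq M-1$. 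Multiplying by $\psi(2^{-s}m)$ and summing over $m$ then gives
\[
U_s=\mathrm{i}\int_0^1K_s(\theta)V(\theta)\,d\theta.
\]
One should check that frequencies $n\geq M$ in $K_s$ do not contribute. This holds because $V(\theta)$ only contains frequencies of absolute value at most $M-1$, so pairing with $e(n\theta)$, $n\geq M$, gives zero.

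\emph{Step 3: conclude.} Minkowski's inequality, Step 1, and Lemma~\ref{cone-kernel-lem} give
\[
\|U_s\|_2\leq\int_0^1|K_s(\theta)|\,\|V(\theta)\|_2\,d\theta\leq2\|S\|_4^2\int_0^1|K_s|\ll\|S\|_4^2.
\]
The main point to check carefully is Step 2: the bookkeeping of frequencies and signs, so that the conjugate terms drop out. The other point is that the positivity of the support of $\psi$ is exactly what lets $K_s$ select only the $z_{ij}$ with $i<j$. The dyadic shell geometry of Lemma~\ref{cone-scale-geometry} is not needed here; I expect it to enter only when the $U_s$ are summed.
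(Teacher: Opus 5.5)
Your proposal is correct and follows the paper's proof essentially step for step: Lemma~\ref{support-lem} with $h=w$, $a=1$, together with \eqref{cone-repeated-separation}, verifies the hypotheses of Corollary~\ref{ortho-cor}, the phase-changed family gives $\|V(\theta)\|_2\leq2\|S\|_4^2$, the positive-frequency support of $\psi$ gives the exact identity $U_s=\mathrm{i}\int_0^1K_s(\theta)V(\theta)\,d\theta$, and Minkowski's inequality with the uniform $L^1$ bound on $K_s$ finishes. (One small slip: after multiplying by $e(m\theta)$ the conjugate terms carry frequency $(j-i)+m$, not $-(j-i)-m$, which is harmless because only its nonvanishing is used.)
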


\begin{proof}
First, we verify the hypotheses of Corollary~\ref{ortho-cor}.
Since the intervals $\{I_j\}$ are separated by at least $10w$ and $|u/r|\leq w^2$, Lemma~\ref{support-lem}, with $h=w$ and $a=1$, gives \eqref{ortho-hypo}. 
Moreover,
\[
    \supp\big(\wh{|f_i|^2}\big)\subset\Omega_i-\Omega_i,
    \qquad
    \supp\big(\wh{f_k\overline{f_j}}\big)
      \subset\Omega_k-\Omega_j.
\]
By \eqref{cone-repeated-separation} and Plancherel's theorem,
\[
    \int_{\ZR^3}|f_i|^2f_j\overline{f_k}\,dx
    =\langle|f_i|^2,f_k\overline{f_j}\rangle_{L^2}=0
    \qquad(j\ne k).
\]
This includes $i=j$ and $i=k$, and proves
\eqref{another-super-ortho}.

For $\theta\in[0,1]$, define
\[
    V(\theta)=2\operatorname{Im}\sum_{1\leq i<j\leq M}
      e((i-j)\theta)f_i\overline{f_j}.
\]
Corollary~\ref{ortho-cor}, applied to the phase-changed family
$\{e(j\theta)f_j\}$, gives
\[
    \|V(\theta)\|_2\leq2\|S\|_4^2
    \qquad(\theta\in[0,1]).
\]
The positive frequencies in $K_s$ give the exact identity
\begin{equation}
\label{cone-phase-recovery}
    U_s=\int_0^1K_s(\theta)\,\mathrm{i}V(\theta)\,d\theta.
\end{equation}
Indeed, a pair $i<j$ contributes
\[
    e((i-j)\theta)f_i\overline{f_j}
    -e((j-i)\theta)\overline{f_i}f_j
\]
to $\mathrm{i}V(\theta)$. 
After integration against $K_s$, the first coefficient is $\psi(2^{-s}(j-i))$, while the second is
$-\psi(2^{-s}(i-j))=0$. 
Therefore, Minkowski's inequality gives
\begin{align*}
    \|U_s\|_2
    &\leq\int_0^1|K_s(\theta)|\,\|V(\theta)\|_2\,d\theta\\
    &\leq2\|S\|_4^2\int_0^1|K_s(\theta)|\,d\theta
      \ll\|S\|_4^2.
\end{align*}
Finally, $U=\sum_s U_s$ follows from \eqref{cone-gap-partition}.
\end{proof}

\medskip 

\subsection{Proof of Theorem \ref{thm-cone}}
\label{sec:cone-proof}

As in Subsection~\ref{sec:finish}, partition the original indices modulo $11$. 
The intervals in each class have gaps of at least $10\delta$. 
Fix one nonempty class and relabel its $M$ intervals in increasing order. 
Their centers satisfy
\begin{equation}
\label{cone-center-spacing}
    t_j-t_i=11\delta(j-i).
\end{equation}
Take $L=1+\lceil\log_2 N\rceil$. 

Apply Lemma~\ref{cone-scale-geometry} to this $M$-cap family  with $D=11\delta$.
The support of $\psi$ and \eqref{cone-center-spacing} give
\[
    \supp(\wh U_s)\subset\Sigma_s,
\]
where $\Sigma_s$ is defined in \eqref{cone-gap-regions}.
The bounded-overlap property \eqref{cone-gap-overlap} provides additional orthogonality for the $U_s$.
By Cauchy--Schwarz at each frequency and Plancherel's theorem,
\begin{align*}
    \|U\|_2^2
    &=\int_{\ZR^3}\Big|\sum_{s=0}^{L-1}\wh U_s(\xi)\Big|^2\,d\xi\\
    &\leq\int_{\ZR^3}
      \Big(\sum_{s=0}^{L-1}\mathbf{1}_{\Sigma_s}(\xi)\Big)
      \sum_{s=0}^{L-1}|\wh U_s(\xi)|^2\,d\xi\\
    &\leq4\sum_{s=0}^{L-1}\|U_s\|_2^2
      \ll L\|S\|_4^4.
\end{align*}
The last inequality uses Lemma~\ref{cone-dyadic-lem}. Since
\[
    \Big|\sum_{j=1}^M f_j\Big|^2=S^2+2\operatorname{Re}U,
\]
we obtain
\begin{equation}
\nonumber
\begin{aligned}
    \Big\|\sum_{j=1}^M f_j\Big\|_4^4 \leq2\|S\|_4^4+8\|U\|_2^2\ll(1+\log N)\|S\|_4^4.
\end{aligned}
\end{equation}
Taking fourth roots and recombining the at most $11$ residue classes, as in Subsection~\ref{sec:finish}, proves \eqref{sq:eq:main}. 

\medskip 

\subsection{Sharpness of Theorem \ref{thm-cone}}
\label{sec:cone-sharpness}
The following proposition implies that \eqref{sq:eq:main} is sharp up to constants.

\begin{samepage}
\begin{proposition}
\label{cone-sharpness-prop}
For every $N\geq2$, there is a nonzero Schwartz family
$f_1,\ldots,f_N$ with $\supp(\wh f_j)\subset\Gamma_j$ such that
\begin{equation}
\label{cone-sharpness-conclusion}
    \Big\|\sum_{j=1}^Nf_j\Big\|_4
    \geq c(1+\log N)^{1/4}
    \Big\|\Big(\sum_{j=1}^N|f_j|^2\Big)^{1/2}\Big\|_4,
\end{equation}
where $c>0$ is absolute.
\end{proposition}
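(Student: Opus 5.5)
The plan is to test \eqref{cone-sharpness-conclusion} on a family whose Fourier transforms are nonnegative bumps, so that every interaction term in $\|\sum_j f_j\|_4^4$ is nonnegative. Then show that each of the $\asymp\log N$ dyadic gap scales in Lemma~\ref{cone-dyadic-lem} contributes as much as the whole square function. Concretely, fix a nonnegative $\chi\in C_c^\infty$ and let $\wh f_j\geq0$ be a smooth bump adapted to $\Gamma_j$: in the coordinates $\Phi(r,t,u)$ it equals $\chi\big((r-\tfrac32)\cdot 4,\ (t-c_j)/\delta,\ u/\delta^2\big)$ times a Jacobian normalization. Here $c_j$ is the center of $I_j$. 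All $f_j$ are then Schwartz, nonzero, and have identical profiles up to the angular position.

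\textbf{Step 1 (reduction to quartic Fourier sums).} Write
\[
\Big\|\sum_j f_j\Big\|_4^4=\sum_{i,j,k,l}\int f_i\overline{f_j}\,\overline{f_k\overline{f_l}}
=\sum_{i,j,k,l}\big\langle \wh f_i*\wt{\wh f_j},\ \wh f_k*\wt{\wh f_l}\big\rangle ,
\]
where $\wt g(\xi)=g(-\xi)$. Every term is $\geq0$ because all Fourier transforms are nonnegative. On the other side,
\[
\|S\|_4^4=\sum_{i,j}\int|f_i|^2|f_j|^2=\sum_{i,j}\|f_i\overline{f_j}\|_2^2 .
\]
Hence it suffices to find, for each $s$ with $0\leq s\leq \log_2N-C$, a set $\mathcal P_s$ of quadruples satisfying two conditions. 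First, the contributions of the quadruples in $\mathcal P_s$ sum to $\gtrsim\|S\|_4^4$. Second, the sets $\mathcal P_s$ are disjoint for different $s$. I take $\mathcal P_s$ to consist of the quadruples with $j-i\asymp 2^s$, $l-k\asymp2^s$, and $(k,l)$ a near-translate of $(i,j)$; these are exactly the alternating configurations left open by Lemma~\ref{support-lem}. Disjointness across $s$ is then automatic, since the gap $j-i$ determines $s$ up to a bounded ambiguity, and the bounded ambiguity is removed by restricting to $s$ in a fixed residue class mod $4$.

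\textbf{Step 2 (size of a single pair).} By rotation-type symmetry in $t$ (a Lorentz transformation that shifts $t$ and distorts the annulus only boundedly for $t\in[0,1]$), the quantity $\|f_i\overline{f_j}\|_2^2$ depends, up to constants, only on the gap $d=|t_i-t_j|$. I would compute it as $\|\wh f_i\|_1^2\|\wh f_j\|_1^2/|\Omega_i-\Omega_j|$ up to constants. The volume of the difference set $\Omega_i-\Omega_j$ is estimated via \eqref{cone-q-identity}: the set is a $\delta^2$-by-$d\delta$ thickening of a two-dimensional patch in which $-Q\asymp d^2$. This gives $\|S\|_4^4$ as an explicit weighted sum over gaps $d=\delta,2\delta,\dots$.

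\textbf{Step 3 (the overlap count, main obstacle).} For a pair $(i,j)$ with $t_j-t_i=d\asymp2^s\delta$, I must show that
\[
\sum_{(k,l)}\big\langle\wh f_i*\wt{\wh f_j},\ \wh f_k*\wt{\wh f_l}\big\rangle\ \gtrsim\ \frac{N\delta}{d}\,\|f_i\overline{f_j}\|_2^2 ,
\]
where the sum runs over the pairs $(k,l)$ with the same gap. Pairs with gap $\asymp d$ form only a fraction $\asymp d$ of all pairs, so this amplification by $N\delta/d=1/d$ is exactly what makes scale $s$ carry $\gtrsim\|S\|_4^4$; summing over $\asymp\log N$ values of $s$ then gives $\|\sum_j f_j\|_4^4\gtrsim(1+\log N)\|S\|_4^4$, and taking fourth roots yields \eqref{cone-sharpness-conclusion}.

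To prove the amplification, I would parametrize near-solutions of $\xi_i-\xi_j=\xi_k-\xi_l$ with $\xi_\bullet\in\Gamma_\bullet$. The differences have the form $r(1,t,t^2)-r'(1,t',t'^2)$, and I would show that the alternating relation can be satisfied, within the $\delta^2$ thickness, by translating $(t,t')$ by up to $\asymp 1$ (in $t$) while compensating with $r,r'\in[1,2]$. The compensation is possible because $Q$ is the only invariant and $Q(\xi-\eta)\approx -rr'(t-t')^2$ can be held fixed by adjusting $r r'$. Carrying this out requires checking that the compensating $r,r'$ stay inside $[1,2]$ for $\asymp 1/d$ admissible shifts of size $\asymp d$, and that the overlap of the two difference sets retains a fixed proportion of the mass. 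I expect this Jacobian/volume bookkeeping to be the hardest part. If the compensation fails near the edges of the annulus, I would first try shrinking the radial support of $\wh f_j$ to $r\in[5/4,7/4]$ and restricting to $t_i,t_j\in[1/4,3/4]$; both changes cost only constants.
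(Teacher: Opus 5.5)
There is a genuine gap in Step~3, and it starts with the choice of $\mathcal P_s$. In your difference formulation, $\langle \wh f_i*\widetilde{\wh f_j},\wh f_k*\widetilde{\wh f_l}\rangle\neq0$ requires $\xi_i+\xi_l=\xi_j+\xi_k$ for some $\xi_\bullet\in\Gamma_\bullet$.

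Take $i<j$ and $(k,l)=(i+m,j+m)$ with $m\neq0$. In increasing order, one of the pairs $\{i,l\}$, $\{j,k\}$ is the two outer indices and the other is the two inner ones, whether $|m|<j-i$ or $|m|>j-i$. That is the nonalternating $(+,-,-,+)$ pairing, which Lemma~\ref{support-lem} (with $h=\delta$) rules out once the four intervals are $2\delta$-separated. So near-translates are \emph{not} the configurations left open. For $j-i\geq3$ and all but $O(1)$ shifts $m$ these terms vanish identically, and $\mathcal P_s$ gives no amplification.

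Interlacing translates ($0<|m|<j-i$) are allowed only in the product formulation $\xi_i+\xi_j=\xi_k+\xi_l$. That is why the paper expands $\|\sum_j f_j\|_4^4=\|\sum_{i,j}\wh f_i*\wh f_j\|_2^2$. For equally oriented differences, the surviving partners are \emph{nested} pairs. A fixed $\xi=r(1,t,t^2)-r'(1,t',t'^2)$ has a one-parameter family of such representations in which $t$ and $t'$ move in opposite directions. The constraint $r,r'\in[1,2]$ confines $t$ to a window of length $O(d)$, not $\asymp1$. Your compensation heuristic fails because the whole vector $\xi$ must be matched, not merely $Q(\xi)$.

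The amplification target is also wrong. By your own Step~2, $\|f_i\overline{f_j}\|_2^2\asymp\delta^{11}/d^2$. So the $\asymp Nd/\delta$ pairs with gap $\asymp d$ carry only $\asymp(\delta/d)\|S\|_4^4$, not a $d$-fraction of it. The gain needed is therefore $\asymp d/\delta$, consistent with the $O(d/\delta)$ nested partners, not $N\delta/d$.

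As stated, your inequality is false for small gaps. At $d\asymp\delta$ it would give $\|\sum_jf_j\|_4^4\gtrsim N^2\delta^9\asymp N\|S\|_4^4$, contradicting Theorem~\ref{thm-cone}. For $d\gg N^{-1/2}$ it is too weak.

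The paper avoids per-pair overlap counting entirely:
\begin{itemize}
\item It sets $H_K=\sum_{K\le j-i<2K}\wh f_i*\wh f_j$, which is nonnegative with $\|H_K\|_1\asymp NK\delta^6$.
\item From $Q(\Phi(r,t,u)+\Phi(r',t',u'))=rr'(t-t')^2+(r+r')(u+u')$ and $\xi_1\geq2$, it gets $|\supp H_K|\lesssim(K\delta)^2$.
\item Cauchy--Schwarz then gives $\|H_K\|_2^2\gtrsim\delta^8$ at every dyadic scale.
\end{itemize}
Note also that Step~2's volume heuristic only yields a lower bound for $\|f_i\overline{f_j}\|_2^2$. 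The upper bound $\|S\|_4^4\lesssim\delta^8$ needs an $L^\infty$ bound on the convolution, which the paper gets from the three-slab determinant $(t_i-t_j)^2$.
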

\end{samepage}

\begin{proof}
Let $\delta=N^{-1}$ and $t_j=(j-\frac12)\delta$.
Choose a fixed nonnegative function
\[
    \varphi\in C_c^\infty\big((1,2)\times(-1/100,1/100)^2\big),
    \qquad \int\varphi=1,
\]
and define
\begin{equation}
\label{cone-example-bumps}
    \wh f_j(\xi)=
    \varphi\Big(
        \xi_1,\frac{\xi_2-t_j\xi_1}{\delta},
        \frac{\xi_3-2t_j\xi_2+t_j^2\xi_1}{\delta^2}
    \Big),
\end{equation}
a smooth bump on a cap-adapted frequency box.
On this support, writing $\xi=\Phi(r,t,u)$, we have $1<r<2$ and
\[
    |t-t_j|\leq\frac{\delta}{100}.
\]
Also,
\[
    u=\xi_3-\frac{\xi_2^2}{\xi_1}
    =\xi_3-2t_j\xi_2+t_j^2\xi_1
    -\frac{(\xi_2-t_j\xi_1)^2}{\xi_1},
\]
so
\begin{equation}
\nonumber
    |u|
    \leq\Big(\frac1{100}+\frac1{10000}\Big)\delta^2
    <\frac{\delta^2}{50}.
\end{equation}
Thus, $\supp(\wh f_j)\subset\Gamma_j$, and every $f_j$ is Schwartz.
The linear change of variables in \eqref{cone-example-bumps} has
inverse Jacobian $\delta^3$. Hence
\begin{equation}
\label{cone-example-mass}
    \wh f_j\geq0,\qquad
    \|\wh f_j\|_1=\delta^3,\qquad
    \|\wh f_j\|_\infty\ll1.
\end{equation}
All implicit constants below depend only on the fixed function
$\varphi$.

\medskip

We first estimate the square function.
For $i\ne j$ and for fixed $\xi\in\ZR^3$, the overlap
$\supp(\wh f_i)\cap(\xi-\supp(\wh f_j))$ is contained in the intersection of the following three slabs in the variable $\eta\in\ZR^3$:
\[
      |\eta_2-t_i\eta_1| \leq\frac{\delta}{100},
\]
\[
    |\eta_3-2t_i\eta_2+t_i^2\eta_1|
    \leq\frac{\delta^2}{100},
\]
\[
    \big|(\xi_3-\eta_3)-2t_j(\xi_2-\eta_2)
    +t_j^2(\xi_1-\eta_1)\big|
    \leq\frac{\delta^2}{100}.
\]
These come from the second and third coordinates defining $\wh f_i$ in \eqref{cone-example-bumps}, and the third coordinate defining $\wh f_j(\xi-\cdot)$.
The absolute determinant of their normal vectors is
\[
    \left|
    \det\begin{pmatrix}
        -t_i&1&0\\
        t_i^2&-2t_i&1\\
        t_j^2&-2t_j&1
    \end{pmatrix}
    \right|
    =(t_i-t_j)^2=\delta^2(i-j)^2.
\]
Therefore, the overlap has volume
$\ll\delta^5/(t_i-t_j)^2$, and \eqref{cone-example-mass} gives
\[
    \|\wh f_i*\wh f_j\|_\infty
    \ll\frac{\delta^5}{(t_i-t_j)^2}
    =\frac{\delta^3}{(i-j)^2}.
\]
For $i=j$, we use
\[
    \|\wh f_i*\wh f_i\|_\infty
    \leq\|\wh f_i\|_\infty\|\wh f_i\|_1
    \ll\delta^3.
\]
Moreover, nonnegativity gives
\[
    \|\wh f_i*\wh f_j\|_1
    =\|\wh f_i\|_1\|\wh f_j\|_1
    =\delta^6.
\]
Hence Plancherel's theorem implies
\begin{equation}
\label{cone-example-denominator}
\begin{aligned}
    \Big\|\Big(\sum_{j=1}^N|f_j|^2\Big)^{1/2}\Big\|_4^4
    &=\sum_{i,j=1}^N\|f_if_j\|_2^2
    =\sum_{i,j=1}^N\|\wh f_i*\wh f_j\|_2^2\\
    &\leq\sum_{i,j=1}^N
    \|\wh f_i*\wh f_j\|_\infty
    \|\wh f_i*\wh f_j\|_1\\
    &\ll\delta^9
    \sum_{i,j=1}^N\frac1{(1+|i-j|)^2}
    \ll N\delta^9=\delta^8.
\end{aligned}
\end{equation}

\medskip

We next obtain a lower bound for the fourth moment.
For each dyadic integer $K$ with $1\leq K\leq N/2$, put
\[
    H_K=
    \sum_{\substack{1\leq i<j\leq N\\K\leq j-i<2K}}
    \wh f_i*\wh f_j.
\]
All sums over $K$ below are restricted to these dyadic integers.
By \eqref{cone-example-mass}, every convolution is nonnegative
and has integral $\delta^6$. Therefore,
\begin{equation}
\label{cone-example-dyadic-mass}
\begin{aligned}
    \|H_K\|_1=\delta^6\sum_{k=K}^{2K-1}(N-k)=\delta^6\Big(NK-\frac{K(3K-1)}2\Big)
      \geq\frac14NK\delta^6.
\end{aligned}
\end{equation}
The last inequality uses $K\leq N/2$.
Next, we show that $H_K$ is supported in a neighborhood of the
cone of thickness $O((K\delta)^2)$ on a fixed radial region.

Recall that $Q(\xi)=\xi_1\xi_3-\xi_2^2$.
Direct calculation gives
\begin{equation}
\nonumber
    Q\big(\Phi(r,t,u)+\Phi(r',t',u')\big)
    =rr'(t-t')^2+(r+r')(u+u').
\end{equation}
For a pair contributing to $H_K$, the angular parameters satisfy
\[
    |t-t'|
    \leq\delta\Big(j-i+\frac1{50}\Big)
    \leq\delta\Big(2K-1+\frac1{50}\Big)
    <2K\delta.
\]
Since $1<r,r'<2$ and $|u|,|u'|<\delta^2/50$, it follows that
\[
    \supp(H_K)\subset
    \left\{\xi:
        2<\xi_1<4,\quad 0<\xi_2<4,\quad
        |Q(\xi)|\leq20(K\delta)^2
    \right\}.
\]
For fixed $(\xi_1,\xi_2)$, the last condition is equivalent to
\[
    \frac{\xi_2^2-20(K\delta)^2}{\xi_1}
    \leq\xi_3\leq
    \frac{\xi_2^2+20(K\delta)^2}{\xi_1}.
\]
This interval has length at most $20(K\delta)^2$, since $\xi_1\geq2$.
The first two coordinates range over a rectangle of area $8$.
Consequently,
\begin{equation}
\label{cone-example-dyadic-volume}
    |\supp(H_K)|\leq160(K\delta)^2.
\end{equation}

By Cauchy--Schwarz, \eqref{cone-example-dyadic-mass}, and
\eqref{cone-example-dyadic-volume},
\begin{equation}
\label{cone-example-one-scale}
    \|H_K\|_2^2
    \geq\frac{\|H_K\|_1^2}{|\supp(H_K)|}
    \gg\frac{(NK\delta^6)^2}{(K\delta)^2}
    =N^2\delta^{10}
    =\delta^8.
\end{equation}
Thus every dyadic gap size gives the same lower bound.
The ranges $[K,2K)$ are disjoint as sets of index gaps, so each pair $i<j$ occurs in at most one $H_K$. 
By nonnegativity of $\{\wh f_j\}$,
\[
    \Big(\sum_{i,j=1}^N\wh f_i*\wh f_j\Big)^2
    \geq\Big(\sum_KH_K\Big)^2
    \geq\sum_KH_K^2.
\] 

Since there are $\asymp1+\log N$ dyadic integers $K$ with
$1\leq K\leq N/2$, Plancherel's theorem and \eqref{cone-example-one-scale} give
\begin{equation}
\label{cone-example-numerator}
\begin{aligned}
    \Big\|\sum_{j=1}^Nf_j\Big\|_4^4
    &=\Big\|\sum_{i,j=1}^N\wh f_i*\wh f_j\Big\|_2^2\\
    &\geq\sum_K\|H_K\|_2^2
      \gg(1+\log N)\delta^8.
\end{aligned}
\end{equation} 
Combining \eqref{cone-example-denominator} and
\eqref{cone-example-numerator}, and taking fourth roots, proves \eqref{cone-sharpness-conclusion}.
\end{proof}

\medskip 

\begin{remark}
\label{slow-growing-rmk}

\rm

The lower bound also shows that a uniform constant is impossible in Theorem~\ref{quartic-ortho-thm}, even with the additional cancellation \eqref{another-super-ortho}. 
Indeed, the example splits into $11$ residue classes satisfying both cancellation hypotheses.
Uniform bounds for these classes would recombine to contradict Proposition~\ref{cone-sharpness-prop}. 
This does not show that the exponent $2$ in \eqref{reverse-sqfcn} for $(1+\log N)^2$ is optimal.
\end{remark}

\bigskip

\section{Proof of Theorem \ref{thm:parabola}}
\label{sec:parabola}

We now give the  parabola argument, following the interlacing method of \cite{Cushman-Demeter-Wu}. 
All norms in this section are over $\ZR^2$, unless another space is specified. 
We keep the notation $\gamma$, $\ell$ and $\mathcal N_\rho$ from Subsection~\ref{sec:parabola-statement}.

Whenever the argument is applied to a subfamily, ignore its zero functions in the ordering, while keeping them equal to zero in the original family. 
Relabel the remaining pairs $(I_i,F_i)$ from left to right as $(I_1,F_1),\ldots,(I_m,F_m)$.  
Keep the original value of $\ell$.  
A block below is consecutive in this ordered list.  
Since $m\le N$, it is enough to prove a bound with loss $m^\varepsilon$.  
The induction is on $m$.

\medskip 

\subsection{Elementary results}

\begin{lemma}[Bounded Fourier overlap]\label{lem:overlap}
Let $u_\alpha\in L^2(\R^d)$ and suppose
$\supp(\widehat u_\alpha)\subset E_\alpha$.  If
\[
 \sup_{\zeta\in\R^d}\#\{\alpha:\zeta\in E_\alpha\}\le M,
\]
then
\[
 \Big\|\sum_\alpha u_\alpha\Big\|_2^2
 \le M\sum_\alpha\|u_\alpha\|_2^2.
\]
\end{lemma}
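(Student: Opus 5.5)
By Plancherel, the statement reduces to a pointwise inequality on the Fourier side. Set $u=\sum_\alpha u_\alpha$. Since each $u_\alpha\in L^2(\R^d)$, we have $\widehat u_\alpha\in L^2$ with $\|\widehat u_\alpha\|_2=\|u_\alpha\|_2$, and $\widehat u=\sum_\alpha\widehat u_\alpha$. Throughout, the family is taken to be finite, as everywhere in the paper. The support hypothesis means $\widehat u_\alpha(\zeta)=0$ for almost every $\zeta\notin E_\alpha$. We may therefore write $\widehat u_\alpha=\mathbf 1_{E_\alpha}\widehat u_\alpha$ almost everywhere; if $E_\alpha$ is not measurable, first replace it by the closed support of $\widehat u_\alpha$, which is contained in $E_\alpha$, so the overlap bound is unaffected.

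Fix $\zeta$ and apply Cauchy--Schwarz to the finite sum. Only indices with $\zeta\in E_\alpha$ contribute, so
\[
 \Big|\sum_\alpha\widehat u_\alpha(\zeta)\Big|^2
 =\Big|\sum_\alpha\mathbf 1_{E_\alpha}(\zeta)\widehat u_\alpha(\zeta)\Big|^2
 \le\Big(\sum_\alpha\mathbf 1_{E_\alpha}(\zeta)\Big)\sum_\alpha|\widehat u_\alpha(\zeta)|^2
 \le M\sum_\alpha|\widehat u_\alpha(\zeta)|^2 .
\]
This is exactly the computation already used in the proof of Theorem~\ref{thm-cone}, with $\Sigma_s$ playing the role of $E_\alpha$.

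Integrating this pointwise bound in $\zeta$ and applying Plancherel on both sides gives
\[
 \|u\|_2^2=\|\widehat u\|_2^2\le M\sum_\alpha\|\widehat u_\alpha\|_2^2=M\sum_\alpha\|u_\alpha\|_2^2 .
\]

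None of these steps is a real obstacle. The only care needed is measure-theoretic: the support condition holds only up to null sets, and the indicator bound must be applied almost everywhere. Both issues are handled by the reduction to closed supports described above.
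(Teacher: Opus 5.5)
Your proposal is correct and is essentially the paper's own proof: Cauchy--Schwarz at each frequency with the weights $\mathbf 1_{E_\alpha}$, the overlap bound $M$, then integration and Plancherel. Your extra measure-theoretic remarks are harmless refinements that the paper leaves implicit. These are the finiteness of the family and the replacement of $E_\alpha$ by the closed support of $\widehat u_\alpha$.
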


\begin{proof}
At each frequency, Cauchy--Schwarz gives
\[
 \Big|\sum_\alpha\widehat u_\alpha(\zeta)\Big|^2
 \le \Big(\sum_\alpha\mathbf{1}_{E_\alpha}(\zeta)\Big)
      \sum_\alpha|\widehat u_\alpha(\zeta)|^2.
\]
Integrate and use Plancherel.
\end{proof}

More generally, suppose that the product supports
$\supp(\widehat{G_{B_1}\cdots G_{B_k}})$, indexed by
$(B_1,\ldots,B_k)$, have overlap at most $M$.  Applying the lemma to these
products gives
\begin{equation}\label{eq:parabola-product-overlap}
 \Big\|\sum_B G_B\Big\|_{2k}^{2k}
 \le M\sum_{B_1,\ldots,B_k}
       \|G_{B_1}\cdots G_{B_k}\|_2^2
 =M\int\Big(\sum_B|G_B|^2\Big)^k.
\end{equation} 

\medskip 

We will use the following elementary estimate.

\begin{lemma}[Near-interaction graph]\label{lem:near-graph}
Let $\mathcal G$ be a graph of maximum degree at most $D$, with loops allowed,
and let $f_v$ be complex-valued functions.  Then
\[
 \Big|\sum_{(v,w)\in\mathcal G}f_v\overline{f_w}\Big|
 \le C D\sum_v|f_v|^2,
\]
where ordered or unordered edge conventions change only the absolute
constant.
\end{lemma}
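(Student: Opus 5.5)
The plan is a Schur-test argument: use the pointwise inequality $|f_v\overline{f_w}|\le\frac12(|f_v|^2+|f_w|^2)$ on each edge, then count how many times each vertex is charged. The claim is pointwise, for each fixed point of the underlying space, so no integration or orthogonality is involved. Fix a convention: regard $\mathcal G$ as a set of ordered pairs $(v,w)$. Each unordered edge $\{v,w\}$ with $v\ne w$ then gives at most two ordered pairs, and each loop gives one. Say that the degree of $v$ is at most $D$, meaning $v$ lies in at most $D$ edges, counting a loop once (or twice, which only changes the constant).

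First, the triangle inequality gives
\[
 \Big|\sum_{(v,w)\in\mathcal G}f_v\overline{f_w}\Big|\le\sum_{(v,w)\in\mathcal G}|f_v||f_w|\le\frac12\sum_{(v,w)\in\mathcal G}\bigl(|f_v|^2+|f_w|^2\bigr).
\]
Next, reorganize the right side by vertex. The term $|f_v|^2$ appears once for each ordered pair with first coordinate $v$, and once for each with second coordinate $v$. Each such pair comes from an edge incident to $v$. An edge yields at most two ordered pairs, so each of these counts is at most $2D$.

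Combining the two counts, the total coefficient of $|f_v|^2$ is at most $\frac12(2D+2D)=2D$. This gives the bound with $C=2$ in the ordered-pair convention. If instead the sum runs over unordered edges, with either orientation chosen on each edge, the coefficient is at most $\frac12\cdot 2D=D$. Switching conventions therefore only changes the absolute constant, as the statement asserts.

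There is no real obstacle. The only point needing care is the bookkeeping for loops and for the ordered versus unordered conventions. This is handled by bounding each vertex's incidence count by $2D$ in the worst case.
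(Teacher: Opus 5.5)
Your proof is correct and matches the paper's, which applies $2|f_vf_w|\le|f_v|^2+|f_w|^2$ edge by edge and counts incidences at each vertex. Your bound of $2D$ on the number of ordered pairs with first coordinate $v$ is loose, since each incident edge contributes at most one such pair, but this only affects the absolute constant.
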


\begin{proof}
Apply $2|f_vf_w|\le |f_v|^2+|f_w|^2$ to each edge and count incidences.
\end{proof}

\medskip 

\subsection{Sector recombination in \texorpdfstring{$L^3$}{L3}}

\begin{lemma}[Product-space interpolation]\label{lem:vector-interp}
Let $P_1,\ldots,P_h$ be Fourier projections on $\R^d$.  Assume that their
frequency regions have overlap at most $M_0$ almost everywhere and that,
for some $p>2$,
\[
 \|P_jf\|_p\le C_p\|f\|_p
 \qquad (1\le j\le h),
\]
with $C_p$ independent of $j$.  Define
$T(f_1,\ldots,f_h)=\sum_jP_jf_j$.  If $2<q<p$ and
\[
 \frac1q=\frac{1-\theta}{2}+\frac\theta p,
\]
then
\begin{equation}\label{eq:vector-interp}
 \|T(f_j)\|_q
 \ll_{p,M_0}h^{\theta(1-1/p)}
 \Big(\sum_{j=1}^h\|f_j\|_q^q\Big)^{1/q}.
\end{equation}
\end{lemma}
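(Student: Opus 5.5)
The plan is to interpolate between an $L^2$ endpoint and an $L^p$ endpoint for the vector-valued operator $T$, acting on mixed-norm spaces $\ell^r(L^r)$ over the index set $\{1,\dots,h\}\times\R^d$. The statement is then a Riesz--Thorin estimate on the product measure space, with the loss recorded by the operator norm at each endpoint.

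\emph{Step 1: the $L^2$ endpoint.} The $L^2$ endpoint will be
\[
 \|T(f_j)\|_2\ll_{M_0}\Big(\sum_j\|f_j\|_2^2\Big)^{1/2}.
\]
Write $E_j$ for the frequency region of $P_j$. Then $\widehat{P_jf_j}=m_j\widehat f_j$ with $|m_j|\le 1$ on $E_j$. For the Fourier projections used later, the multiplier is an indicator or a bounded smooth cutoff, so $|m_j|\le 1$. Lemma~\ref{lem:overlap} then gives
\[
 \|\textstyle\sum_jP_jf_j\|_2^2\le M_0\sum_j\|P_jf_j\|_2^2\le M_0\sum_j\|f_j\|_2^2.
\]
If the multipliers are not assumed uniformly bounded by $1$, I would use a bound $\|P_j\|_{2\to2}\ll 1$, which follows from $P_j$ being a projection with bounded symbol.

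\emph{Step 2: the $L^p$ endpoint.} Here I would use only the triangle inequality and H\"older in the index:
\[
 \|T(f_j)\|_p\le C_p\sum_j\|f_j\|_p\le C_p\,h^{1-1/p}\Big(\sum_j\|f_j\|_p^p\Big)^{1/p}.
\]

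\emph{Step 3: interpolation.} View $(f_j)$ as a single function $F(j,x)$ on $\{1,\dots,h\}\times\R^d$ with counting times Lebesgue measure. Then $\big(\sum_j\|f_j\|_r^r\big)^{1/r}=\|F\|_{L^r}$, and $T$ is linear from $L^r$ of the product space into $L^r(\R^d)$. Riesz--Thorin, which is valid for complex-valued functions between arbitrary $\sigma$-finite measure spaces, with $1/q=(1-\theta)/2+\theta/p$, gives
\[
 \|T\|_{L^q\to L^q}\le (M_0^{1/2})^{1-\theta}\big(C_ph^{1-1/p}\big)^{\theta}.
\]
This is exactly \eqref{eq:vector-interp}, with implied constant depending on $p$ and $M_0$ (and on $C_p$, which is fixed).

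The only real obstacle is Step 1: justifying that each $P_j$ is $L^2$-bounded uniformly. The overlap hypothesis alone controls the sum but not individual symbols. I expect to settle this from the definition of "Fourier projection" as multiplication by a bounded symbol supported in the frequency region. If the symbols are only bounded by some constant, that constant is absorbed into the implied constant as well. Everything else is routine.
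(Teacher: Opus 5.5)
Your proposal is correct and follows the paper's proof essentially verbatim. Both arguments take the $L^2$ endpoint from bounded overlap and Plancherel with norm $O(M_0^{1/2})$, the $L^p$ endpoint from the triangle inequality, the uniform projection bound and H\"older with norm $O(C_p h^{1-1/p})$, and then apply Riesz--Thorin on $\R^d\times\{1,\ldots,h\}$ with counting measure in the index; your caveat that Step 1 uses $|m_j|\le 1$ matches the paper's implicit reading of ``Fourier projection'' as multiplication by the indicator of the frequency region.
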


\begin{proof}
Finite overlap and Plancherel give
\[
 T:L^2(\R^d;\ell_h^2)\longrightarrow L^2(\R^d)
 \quad\text{with norm }O(M_0^{1/2}).
\]
The triangle inequality, the uniform $L^p$ projection bound, and H\"older give
\[
 T:L^p(\R^d;\ell_h^p)\longrightarrow L^p(\R^d)
 \quad\text{with norm }O(C_ph^{1-1/p}).
\]
Interpolate these two scalar operator bounds after identifying a sequence
$(f_j)$ with a function on $\R^d\times\{1,\ldots,h\}$ equipped with counting
measure.
\end{proof}

We need only the following planar consequence.  A planar sector is an
intersection of two half-planes.  Its sharp Fourier projection is a
composition of rotated one-dimensional Riesz projections and is uniformly
bounded on $L^p$, independently of direction and aperture.  A two-sided
sector is the union of a sector and its negative.

\begin{corollary}[Flat $\ell^3L^3$ sector estimate]\label{cor:flat-cone}
Suppose $U_1,\ldots,U_h\in\mathcal S(\R^2)$ have Fourier supports in
two-sided planar sectors of fixed bounded overlap away from the origin.
For every $\eta>0$,
\begin{equation}\label{eq:flat-cone}
 \Big\|\sum_{j=1}^hU_j\Big\|_3^3
 \ll_\eta h^{1+\eta}\sum_{j=1}^h\|U_j\|_3^3,
\end{equation}
uniformly in the directions and apertures.
\end{corollary}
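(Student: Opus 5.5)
We will deduce \eqref{eq:flat-cone} from Lemma~\ref{lem:vector-interp} with $d=2$, $q=3$ and a large exponent $p$. The plan has four steps: realize the $U_j$ as images of Fourier projections, check the two hypotheses of Lemma~\ref{lem:vector-interp}, apply it with $f_j=U_j$, and then optimize $p$.

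First, for each $j$, let $\Theta_j$ be the two-sided planar sector containing $\supp(\wh U_j)$, and let $P_j$ be the sharp Fourier projection onto $\Theta_j$. Since $\supp(\wh U_j)\subset\Theta_j$, we have $P_jU_j=U_j$, so
\[
\sum_{j=1}^h U_j=T(U_1,\ldots,U_h).
\]
The sharp projection onto a one-sided sector $\{\zeta:\zeta\cdot a>0,\ \zeta\cdot b>0\}$ is the composition of two rotated one-dimensional Riesz projections $\mathbf 1_{\{\zeta\cdot a>0\}}$. Each of these is a Hilbert transform in the direction $a$, tensored with the identity in the orthogonal variable, plus the identity. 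By rotation invariance, each is bounded on $L^p(\R^2)$ with a constant $C_p$ independent of the direction.

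The projection onto a two-sided sector is the sum of the projection onto the sector and the projection onto its negative, up to the origin and the boundary lines, which are null sets. Hence it is bounded with norm at most $2C_p^2$, uniformly in direction and aperture. The overlap hypothesis with constant $M_0$ is exactly the assumed bounded overlap of the sectors away from the origin, which holds almost everywhere.

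Second, we apply \eqref{eq:vector-interp} with $q=3$. This gives $\theta$ determined by
\[
\frac13=\frac{1-\theta}{2}+\frac{\theta}{p},
\qquad\text{so}\qquad
\theta=\frac{1/6}{1/2-1/p}=\frac{p}{3(p-2)}.
\]
The exponent of $h$ is then
\[
\theta\Big(1-\frac1p\Big)=\frac{p-1}{3(p-2)}.
\]
As $p\to\infty$ this tends to $1/3$. Given $\eta>0$, we choose $p=p(\eta)$ large enough that $\frac{p-1}{3(p-2)}\le\frac{1+\eta}{3}$. Lemma~\ref{lem:vector-interp} then yields
\[
\Big\|\sum_jU_j\Big\|_3\ll_{\eta}h^{(1+\eta)/3}\Big(\sum_j\|U_j\|_3^3\Big)^{1/3},
\]
where the constant depends only on $p(\eta)$ and the fixed overlap constant $M_0$. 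Cubing both sides gives \eqref{eq:flat-cone}.

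The main point needing care is the uniformity claim, namely that $C_p$ does not depend on the directions or apertures. This holds because each projection is built from directional Hilbert transforms, whose $L^p$ bounds are rotation invariant. A narrow aperture only changes which two directions are used, so it does not affect the constants. We also take care that the boundary lines and the origin are measure zero, so that sharp cutoffs and "overlap almost everywhere" are legitimate. Finally, since the $U_j$ are Schwartz, all these operators act on $L^2\cap L^p$ and the interpolation is justified.
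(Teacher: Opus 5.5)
Your proposal is correct and follows the paper's own proof. Both apply Lemma~\ref{lem:vector-interp} with $q=3$ and $\theta=p/[3(p-2)]$, so that after cubing the power of $h$ is $(p-1)/(p-2)=1+1/(p-2)$, and then take $p$ large in terms of $\eta$. Your check that the sharp two-sided sector projections are uniformly bounded on $L^p$ (compositions of rotated Riesz projections, with the sector and its negative meeting in a null set) is the same justification the paper gives just before the corollary.
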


\begin{proof}
Apply Lemma~\ref{lem:vector-interp} with $q=3$.  Here
$\theta=p/[3(p-2)]$, and after cubing the power of $h$ is
\[
 3\theta(1-1/p)=\frac{p-1}{p-2}=1+\frac1{p-2}.
\]
Choose $p$ sufficiently large in terms of $\eta$.
This planar estimate is also stated in \cite[Corollary 2.10]{Cushman-Demeter-Wu}.
\end{proof}

\medskip 

\subsection{Interlacing and thickened triple sums}

\begin{lemma}[Interlacing]\label{lem:interlacing}
Let
\[
 x\le y\le z,
 \qquad
 x'\le y'\le z',
\]
and suppose
\[
 x+y+z=x'+y'+z',
 \qquad
 x^2+y^2+z^2=x'^2+y'^2+z'^2.
\]
If the two multisets are distinct and $x<x'$, then
\begin{equation}\label{eq:interlace-chain}
 x<x'\le y'\le y\le z\le z'.
\end{equation}
If in addition $x<y<z$ and $x'<y'<z'$, then every inequality is strict:
$x<x'<y'<y<z<z'$.
\end{lemma}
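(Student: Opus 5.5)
The plan is to reduce everything to a single cubic polynomial. Let $p=x+y+z$ and $q=x^2+y^2+z^2$, so the elementary symmetric functions $e_1=p$ and $e_2=(p^2-q)/2$ are shared by both triples. Then
\[
P(t)=(t-x)(t-y)(t-z),\qquad P'(t)=(t-x')(t-y')(t-z')
\]
differ only in the constant term:
\[
P'(t)-P(t)=xyz-x'y'z'=:c.
\]
If $c=0$, the two multisets coincide, which is excluded. So the graph of $P'$ is a nonzero vertical translate of the graph of $P$, and the conclusion becomes a statement about where a translate of a cubic with real roots has its roots.

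The first step is to fix the sign of $c$. Evaluating at $x$ gives $P'(x)=P(x)+c=c$. Since $x<x'\le y'\le z'$, all three factors of $P'(x)$ are negative, so $c<0$. Hence $P'=P+c<P$ everywhere.

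The second step places each root of $P'$ by a sign argument on $P$.
\begin{itemize}
\item At $x'$: $0=P'(x')=P(x')+c$, so $P(x')=-c>0$. The cubic $P$ is positive exactly on $(x,y)\cup(z,\infty)$, up to degenerate equalities. So $x'\in(x,y]$ or $x'>z$.
\item At $y'$ and $z'$: likewise $P(y')>0$ and $P(z')>0$.
\item Ruling out $x'>z$: this would force all roots of $P'$ to exceed $z$, so $\sum x'>\sum x$, contradicting equal sums. Hence $x'\in(x,y)$.
\end{itemize}

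The third step orders the remaining roots by monotonicity. On $[x,+\infty)$ the function $P'$ has exactly the roots $x'\le y'\le z'$ and is negative on $(y',z')$. Since $P'<P$, at $t=z$ we have $P'(z)<0$, so $z\in(y',z')$ or $z<x'$; the latter is impossible because $x'<y\le z$. Thus $y'<z<z'$. At $t=y$ we have $P'(y)=c<0$, so $y\in(y',z')$ or $y<x'$. The second option contradicts $x'<y$ from the second step, so $y'<y$.

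Combining the three steps gives $x<x'<y'<y<z<z'$, with the caveat that the weak inequalities in \eqref{eq:interlace-chain} come only from degenerate cases where $P$ has repeated roots. Strictness in the distinct case follows because equalities such as $y'=y$ would give $P'(y)=0\ne c$, and $y=z$ is excluded by hypothesis. In the general (possibly repeated-root) case, each comparison is derived from the single fact $P'(y)=P'(z)=c<0$ together with the sign pattern of $P'$, and that sign pattern gives only non-strict inequalities when roots coincide. The main obstacle I expect is this bookkeeping of degenerate cases: making sure the sign-pattern statements, such as "$P'<0$ forces $t<x'$ or $t\in(y',z')$," are stated correctly when $x'=y'$ or $y'=z'$. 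I will handle it by writing the sign pattern as $P'(t)<0$ only if $t<x'$ or $y'<t<z'$, which holds in all cases.
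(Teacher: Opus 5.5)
Your proof is correct and follows essentially the paper's route: the two monic cubics differ by a constant, evaluating at $x$ shows the shift is negative (the paper's $C=-c>0$), and every root of your second cubic $P'$ lies in $\{P>0\}=(x,y)\cup(z,\infty)$. The difference is in the finish. You exclude $x'>z$ using the equal sums, and you read off $y'<y<z'$ and $y'<z<z'$ from $P'(y)=P'(z)=c<0$, whereas the paper uses that $P$ is increasing on $(z,\infty)$. Your finish treats repeated roots directly, without the paper's limiting argument. It also shows that $y'<y$ and $z<z'$ are always strict, so your closing remark that degenerate cases give only non-strict inequalities undersells your own argument: the only possible equalities are $x'=y'$ and $y=z$.
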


\begin{proof}
Equality of the first two power sums gives equality of the first two
elementary symmetric functions.  Hence the monic cubics
\[
 P(T)=(T-x)(T-y)(T-z),
 \qquad
 Q(T)=(T-x')(T-y')(T-z')
\]
differ by a constant.  Since $x<x'\le y'\le z'$, one has $Q(x)<0$, and hence
$C=P(x)-Q(x)=-Q(x)>0$.  The roots of $Q$ are therefore the three intersections of the
graph of $P$ with the horizontal line of height $C$.  These intersections can
occur only where $P>0$, namely in $(x,y)\cup(z,\infty)$.  The polynomial $P$
is strictly increasing on $(z,\infty)$, so exactly one intersection lies
there.  The other two lie in $(x,y)$.  This is the strict form of
\eqref{eq:interlace-chain}; repeated roots follow by a limiting argument.
\end{proof}

\begin{figure}[htbp]
\centering
\resizebox{.92\textwidth}{!}{%
\begin{tikzpicture}[x=1cm,y=1cm,>=Stealth,
  line cap=round,line join=round,
  parlabel/.style={font=\small}]
  \draw[axis] (-.2,0)--(10.3,0) node[right] {$t$};
  \draw[capblue!45,thin] (.6,.62)--(8,.62);
  \draw[capteal!45,thin] (2,-.62)--(9.4,-.62);
  \foreach \parpos/\partext in {.6/x,5.7/y,8/z}{
    \draw[gray!65,densely dotted] (\parpos,0)--(\parpos,.62);
    \fill[capblue] (\parpos,.62) circle (2.3pt);
    \node[parlabel,above=4pt] at (\parpos,.62) {$\partext$};
  }
  \foreach \parpos/\partext in {2/x',4.2/y',9.4/z'}{
    \draw[gray!65,densely dotted] (\parpos,0)--(\parpos,-.62);
    \fill[capteal] (\parpos,-.62) circle (2.3pt);
    \node[parlabel,below=4pt] at (\parpos,-.62) {$\partext$};
  }
  \node[parlabel] at (5,-1.5) {$x<x'<y'<y<z<z'$};
\end{tikzpicture}}
\caption{Two distinct sorted triples with the same first two
moments interlace. The strict ordering shown here is the case of
simple roots.}
\label{fig:interlacing}
\end{figure}

\begin{corollary}[Exact overlap via interlacing]\label{cor:exact-triple-overlap}
Let $J_1<\cdots<J_h$ be ordered intervals.  The family
\[
 \gamma(J_a)+\gamma(J_b)+\gamma(J_c),
 \qquad a\le b\le c,
\]
has overlap multiplicity $O(h)$.
\end{corollary}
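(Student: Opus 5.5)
The plan is to fix a point $\zeta=(s,q)\in\ZR^2$ and show that the number of index triples $a\le b\le c$ with $\zeta\in\gamma(J_a)+\gamma(J_b)+\gamma(J_c)$ is $O(h)$. For each such triple choose a representative $x\in J_a$, $y\in J_b$, $z\in J_c$ with
\[
x+y+z=s,\qquad x^2+y^2+z^2=q.
\]
Because the intervals are ordered, $x\le y\le z$ automatically. So every admissible triple yields a sorted real triple with prescribed first and second power sums, and Lemma~\ref{lem:interlacing} can be applied to any two of them.

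\textbf{Step 1 (equal multisets).} If two representatives have the same first coordinate $x=x'$, then $y+z=y'+z'$ and $y^2+z^2=y'^2+z'^2$. Hence $\{y,z\}=\{y',z'\}$, and the two multisets coincide. A single real number lies in at most two of the ordered intervals, which can only share an endpoint. Therefore one multiset $\{x,y,z\}$ is compatible with at most $2^3=8$ index triples. It thus suffices to count distinct multisets, up to this factor of $8$, choosing one index triple per multiset.

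\textbf{Step 2 (monotone chain).} Distinct multisets now have distinct minima, so order them by $x$. For two consecutive ones with $x<x'$, Lemma~\ref{lem:interlacing} gives $x<x'\le y'\le y\le z\le z'$. Reading off the intervals, we get
\[
a\le a',\qquad b'\le b,\qquad c\le c'.
\]
One must pick the interval assignments consistently when a value sits on a shared endpoint, for instance always the leftmost admissible one, which preserves the inequalities. Thus along the chain $a$ and $c$ are nondecreasing and $b$ is nonincreasing. The quantity $a+(h-b)+c$ is therefore nondecreasing, takes values in $[1,3h]$, and strictly increases whenever the index triple changes. Hence the chain passes through at most $3h$ distinct index triples. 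Combined with Step~1, at most $O(h)$ index triples contain $\zeta$.

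\textbf{Main obstacle.} The main difficulty I expect is the bookkeeping at shared endpoints and repeated values, where Lemma~\ref{lem:interlacing} only gives non-strict inequalities. I would handle it by the endpoint-assignment convention above. The worst case is an absolute constant factor, which is harmless for an $O(h)$ bound.
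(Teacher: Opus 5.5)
Your route is the paper's: fix the point, take one sorted parameter triple per contributing interval triple, note that equal minima force equal multisets, order by the minimum, and apply Lemma~\ref{lem:interlacing} to successive triples so that $a$ and $c$ can only increase and $b$ can only decrease. Shared endpoints then cost a factor $2^3$. The only real difference is where the endpoint convention is imposed, and with your ordering the final count does not close as written.

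After Step~1 you reduce to counting distinct multisets. Step~2, however, bounds only the number of distinct leftmost labels $\lambda(m)$ occurring along the chain, not the length of the chain. Two different representatives can carry the same leftmost label. For example, take one chosen for $(a,b,c)$ and one chosen for $(a,b,c+1)$ whose largest coordinate is the common endpoint of $J_c$ and $J_{c+1}$. Monotonicity of $a+(h-b)+c$ bounds the number of label changes, not the number of steps.

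The repair is one line. Every index triple compatible with a multiset $m$ lies in $\lambda(m)+\{0,1\}^3$, since the only other interval that can contain a point is the right neighbor of the leftmost one. Each contributing triple is compatible with its own representative. Hence all contributing triples lie in a union of at most $3h$ sets of size $8$, which gives at most $24h$.

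The paper sidesteps this by adopting, before choosing representatives, the convention that a shared endpoint belongs only to the interval on its right. Then each parameter has a unique interval, and distinct contributing triples have distinct representatives. The label changes at every step of the chain, so the chain length itself is at most $3h-2$, and the factor $2^3$ is paid once at the end.

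A minor point: $x\le y\le z$ is automatic only when $a<b<c$. If two indices coincide, you must sort the two parameters lying in the common interval, as the paper does.
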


\begin{proof}
The reader should first realize that if each $J$ consisted of just one point, this would trivially follow from basic strict convexity. Indeed, the choice of the point $J_a$ within a fiber would uniquely determine the other two points $J_b,J_c$. The proof for intervals relies critically on interlacing.

Fix a point $P=(P_1,P_2)\in\mathbb{R}^2$. A triple of intervals
$(J_a,J_b,J_c)$, with $a\le b\le c$, contributes to $P$ if
\[
    P\in\gamma(J_a)+\gamma(J_b)+\gamma(J_c).
\]
This means that there are parameters
\[
    x\in J_a,\qquad y\in J_b,\qquad z\in J_c
\]
such that
\[
    P=\gamma(x)+\gamma(y)+\gamma(z),
\]
or, equivalently,
\[
    x+y+z=P_1,\qquad x^2+y^2+z^2=P_2.
\]
Since the intervals are ordered, we may arrange that $x\le y\le z$,
sorting the parameters within any interval that occurs more than once.
We must show that only $O(h)$ distinct triples of intervals can
contribute to this fixed $P$.

First, we use the convention that a shared endpoint belongs only to the
interval on its right, so that every parameter belongs to a unique
interval. For each contributing triple of intervals, choose one triple
of parameters $(x,y,z)$ as above.

Two chosen triples with the same smallest parameter $x$ must coincide.
Indeed,
\[
    y+z=P_1-x,\qquad y^2+z^2=P_2-x^2
\]
determine the ordered pair $y\le z$ uniquely. Since each parameter
belongs to a unique interval, coinciding parameter triples come from
the same triple of intervals.

We can therefore list the distinct contributing triples as
\[
    (J_{a_1},J_{b_1},J_{c_1}),\ldots,
    (J_{a_m},J_{b_m},J_{c_m}),
\]
with chosen parameters
\[
    x_\nu\in J_{a_\nu},\qquad
    y_\nu\in J_{b_\nu},\qquad
    z_\nu\in J_{c_\nu},
\]
in such a way that
\[
    x_1<x_2<\cdots<x_m.
\]

Consider two successive triples. Their parameters satisfy
\[
\begin{aligned}
    x_\nu+y_\nu+z_\nu
    &=x_{\nu+1}+y_{\nu+1}+z_{\nu+1}=P_1,\\
    x_\nu^2+y_\nu^2+z_\nu^2
    &=x_{\nu+1}^2+y_{\nu+1}^2+z_{\nu+1}^2=P_2.
\end{aligned}
\]
Applying Lemma~5.5 with
\[
\begin{aligned}
    (x,y,z)&=(x_\nu,y_\nu,z_\nu),\\
    (x',y',z')&=(x_{\nu+1},y_{\nu+1},z_{\nu+1}),
\end{aligned}
\]
we obtain
\[
    x_\nu<x_{\nu+1}
    \le y_{\nu+1}\le y_\nu
    \le z_\nu\le z_{\nu+1}.
\]
Thus the smallest and largest parameters move to the right, while the
middle parameter moves to the left. Because $J_1,\ldots,J_h$ are
ordered, their interval indices satisfy
\[
    a_\nu\le a_{\nu+1},\qquad
    b_\nu\ge b_{\nu+1},\qquad
    c_\nu\le c_{\nu+1}.
\]

At least one of these three indices changes at every step, since the
contributing triples of intervals are distinct. Each index takes
values in $\{1,\ldots,h\}$ and moves in only one direction, so each can
change at most $h-1$ times. Consequently,
\[
    m-1\le 3(h-1),
    \qquad\text{and hence}\qquad
    m\le 3h-2.
\]

Finally, allowing a shared endpoint to belong to both adjacent
intervals changes the count by at most an absolute factor. Under the
convention above, a parameter assigned to $J_j$ could originally have
belonged to $J_j$ or $J_{j-1}$. Thus each contributing triple under the
unique-endpoint convention accounts for at most $2^3$ original triples.
The number of contributing triples is therefore still $O(h)$, uniformly
in $P$.

\end{proof}

We next show that approximate equality of the first two moments can be changed
into exact equality by moving the parameters only a little.  For
$\mathbf{t}=(t_1,t_2,t_3)$, set
\[
 s(\mathbf{t})=\sum_{i=1}^3t_i,
 \quad q(\mathbf{t})=\sum_{i=1}^3t_i^2,
 \quad \mu(\mathbf{t})=\frac{s(\mathbf{t})}3,
 \quad r(\mathbf{t})^2=q(\mathbf{t})-\frac{s(\mathbf{t})^2}{3}.
\]
For fixed $s$ and $q$, the possible triples lie on a circle, possibly reduced
to one point, in the plane $t_1+t_2+t_3=s$.

\begin{lemma}[Correcting approximate moment equalities]\label{lem:exactification}
Let $\mathbf{t},\mathbf{u}\in[0,1]^3$ be sorted and suppose
\[
 |s(\mathbf{t})-s(\mathbf{u})|+|q(\mathbf{t})-q(\mathbf{u})|\le C A\ell^2.
\]
There is a sorted triple $\widetilde{\mathbf{u}}\in\R^3$ with exactly the same
first two moments as $\mathbf{t}$ and
\[
 \max_i|\widetilde u_i-u_i|\le C_A\ell.
\]
\end{lemma}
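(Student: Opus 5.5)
The plan is to correct the two moments one at a time, using the geometry of the level sets noted just before the statement: the triples with prescribed $s$ and $q$ form a circle of radius $r$ around $\mu(1,1,1)$ in the plane $t_1+t_2+t_3=s$. First I fix the sum by a translation, then I fix the second moment by a radial dilation inside that plane. Both operations preserve the ordering of the coordinates. Throughout I use $\ell\le 1$, which holds because the intervals partition $[0,1]$, so $\ell^2\le\ell$.

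\emph{Step 1 (translation).} Put $c=(s(\mathbf{t})-s(\mathbf{u}))/3$, so $|c|\le CA\ell^2$, and set $\mathbf{u}'=\mathbf{u}+c(1,1,1)$. Then $\mathbf{u}'$ is sorted, $s(\mathbf{u}')=s(\mathbf{t})$, and $\mu(\mathbf{u}')=\mu(\mathbf{t})=:\mu$. Moreover
\[
q(\mathbf{u}')-q(\mathbf{u})=2c\,s(\mathbf{u})+3c^2 .
\]
Since $\mathbf{u}\in[0,1]^3$, this difference is $O(A\ell^2+A^2\ell^4)=O_A(\ell^2)$. Consequently $|q(\mathbf{u}')-q(\mathbf{t})|\le C'_A\ell^2$. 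Because the sums now agree, this gives
\[
|r(\mathbf{u}')^2-r(\mathbf{t})^2|\le C'_A\ell^2 .
\]

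\emph{Step 2 (dilation).} Write $\rho=r(\mathbf{t})$ and $\rho'=r(\mathbf{u}')$. Note that $r(\mathbf{v})$ is exactly the Euclidean norm of $\mathbf{v}-\mu(\mathbf{v})(1,1,1)$.

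If $\rho'>0$, define
\[
\widetilde{\mathbf{u}}=\mu(1,1,1)+\frac{\rho}{\rho'}\bigl(\mathbf{u}'-\mu(1,1,1)\bigr).
\]
This triple has sum $s(\mathbf{t})$ and radius $\rho$, hence the same $q$ as $\mathbf{t}$. It is sorted, since a positive multiple of a sorted vector, shifted by a constant vector, stays sorted. Its displacement satisfies
\[
\max_i|\widetilde u_i-u'_i|\le|\rho-\rho'|\le\sqrt{|\rho^2-\rho'^2|}\le\sqrt{C'_A}\,\ell .
\]
Here the middle inequality is the elementary bound $|a-b|\le\sqrt{|a^2-b^2|}$ for $a,b\ge0$. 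Adding the translation error $|c|\le CA\ell^2\le CA\ell$ gives the claim.

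If $\rho'=0$, the dilation is undefined, and I instead take $\widetilde{\mathbf{u}}=\mathbf{t}$. In this case $\mathbf{u}'=\mu(1,1,1)$ and $\rho\le\sqrt{C'_A}\,\ell$. Hence
\[
|t_i-u_i|\le|t_i-\mu|+|c|\le\rho+|c|=O_A(\ell).
\]

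The only delicate point is Step 2. The moment discrepancy is of size $\ell^2$, and the square root converts it into a displacement of size $\ell$, which is exactly the precision claimed. One must avoid dividing by $\rho'$ when the deviation vector is small, and the degenerate case above shows that whenever $\rho'$ is small the target circle itself is small, so it lies within $O_A(\ell)$ of $\mathbf{u}'$. I would also check that the convex-combination estimate $|\rho-\rho'|\le\sqrt{|\rho^2-\rho'^2|}$ gives a bound uniform in the position of the circle, which it does since it involves only the radii.
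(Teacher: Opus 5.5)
Your proof is correct and is essentially the paper's argument: the paper sets $\widetilde{\mathbf{u}}=\mu(\mathbf{t})(1,1,1)+\frac{r(\mathbf{t})}{r(\mathbf{u})}\bigl(\mathbf{u}-\mu(\mathbf{u})(1,1,1)\bigr)$, which is exactly your translation followed by your dilation, and it handles $r(\mathbf{u})=0$ by taking $\widetilde{\mathbf{u}}=\mathbf{t}$, just as you do. Since $r$ is translation invariant, $\rho'=r(\mathbf{u})$, so you could replace your Step 1 computation of $q(\mathbf{u}')-q(\mathbf{u})$ with the direct bound $|r(\mathbf{t})^2-r(\mathbf{u})^2|\ll A\ell^2$; this changes nothing in the argument.
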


\begin{proof}
We have
\[
 |\mu(\mathbf{t})-\mu(\mathbf{u})|\ll A\ell^2,
 \qquad
 |r(\mathbf{t})^2-r(\mathbf{u})^2|\ll A\ell^2,
\]
and hence $|r(\mathbf{t})-r(\mathbf{u})|\ll_A\ell$.  If
$r(\mathbf{u})>0$, define
\[
 \widetilde{\mathbf{u}}
 =\mu(\mathbf{t})(1,1,1)
  +\frac{r(\mathbf{t})}{r(\mathbf{u})}
       \big(\mathbf{u}-\mu(\mathbf{u})(1,1,1)\big).
\]
Translation and nonnegative radial scaling preserve coordinate order.  Moreover,
\[
 |\widetilde u_i-u_i|
 \le |\mu(\mathbf{t})-\mu(\mathbf{u})|
 +|r(\mathbf{t})-r(\mathbf{u})|
   \frac{|u_i-\mu(\mathbf{u})|}{r(\mathbf{u})}
 \ll_A\ell.
\]
If $r(\mathbf{u})=0$, take $\widetilde{\mathbf{u}}=\mathbf{t}$.  Since every $u_i$ equals
$\mu(\mathbf{u})$,
\[
    |t_i-u_i|\le |t_i-\mu(\mathbf{t})|+|\mu(\mathbf{t})-\mu(\mathbf{u})| \le r(\mathbf{t})+O(A\ell^2)\ll_A\ell. \qedhere
\]
\end{proof}

The corrected triple may lie slightly outside $[0,1]$.  This is harmless in
the next auxiliary argument: we extend $\gamma(t)=(t,t^2)$ algebraically to
all real $t$ and use unbounded exterior cells.

\begin{proposition}[Overlap of thickened triple sums]\label{prop:robust-triple-overlap}
Let $K_1<\cdots<K_h$ be the hulls of consecutive blocks in a relabeled
nonzero subfamily.  Then
\begin{equation}\label{eq:robust-triple-overlap}
 \operatorname{mult}\Big\{
 \mathcal N_{3A\ell^2}
   \big(\gamma(K_a)+\gamma(K_b)+\gamma(K_c)\big):
 a\le b\le c\Big\}
 \le C_Ah.
\end{equation}
\end{proposition}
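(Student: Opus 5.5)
Fix a point $P\in\R^2$ and count the triples $(a,b,c)$ with $a\le b\le c$ for which $P\in\mathcal N_{3A\ell^2}\big(\gamma(K_a)+\gamma(K_b)+\gamma(K_c)\big)$. The plan is to reduce to exact overlap, where Corollary~\ref{cor:exact-triple-overlap} applies, at the cost of enlarging each hull $K_a$ by $O_A(\ell)$. The key fact is that each hull $K_a$ is a union of original intervals, each of length at least $\ell$. Hence any $O_A(\ell)$-enlargement of $K_a$ meets only $O_A(1)$ neighbouring hulls. This is where the minimal length $\ell$ enters, and why the thickness $A\ell^2$ is tied to it.

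\emph{Reduction to approximate moment equality.} For each contributing triple choose sorted parameters $\mathbf u=(x,y,z)$ with $x\in K_a$, $y\in K_b$, $z\in K_c$ and $|P-\gamma(x)-\gamma(y)-\gamma(z)|\le 3A\ell^2$. Then
\[
|s(\mathbf u)-P_1|+|q(\mathbf u)-P_2|\ll A\ell^2 .
\]
Fix one reference contributing triple with parameters $\mathbf t$. Any other contributing $\mathbf u$ satisfies the hypothesis of Lemma~\ref{lem:exactification} relative to $\mathbf t$. Lemma~\ref{lem:exactification} then yields a sorted $\widetilde{\mathbf u}$ with exactly the moments of $\mathbf t$ and $\max_i|\widetilde u_i-u_i|\le C_A\ell$. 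So every contributing triple produces a point $P^*:=(s(\mathbf t),q(\mathbf t))$ with $P^*\in\gamma(x')+\gamma(y')+\gamma(z')$, where $x',y',z'$ lie within $C_A\ell$ of $K_a,K_b,K_c$ respectively. (The $\mathbf u\in[0,1]^3$ requirement holds since the hulls lie in $[0,1]$.)

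\emph{Exact count on a coarsened cell partition.} Extend the partition $K_1<\cdots<K_h$ of the relevant range to a partition of $\R$ by adding the two unbounded exterior cells $(-\infty,\min K_1)$ and $(\max K_h,\infty)$; this is why $\gamma$ is extended algebraically. Note that $\widetilde{\mathbf u}$ may leave $[0,1]$. Apply Corollary~\ref{cor:exact-triple-overlap}, whose proof only uses ordered intervals, to these $h+2$ cells at the single point $P^*$. At most $O(h)$ cell triples $(a',b',c')$ contain a representation of $P^*$.

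\emph{Counting preimages.} Each contributing $(a,b,c)$ maps to the cell triple containing $(\widetilde u_1,\widetilde u_2,\widetilde u_3)$. Since $\widetilde u_1$ lies within $C_A\ell$ of $K_a$ and every interior cell has length $\ge\ell$, the index $a'$ differs from $a$ by at most $C_A+1$; the same holds for $b'$ and $c'$. So each cell triple has $O_A(1)$ preimages, and the multiplicity is $\le C_Ah$.

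The main obstacle is this preimage step: the bound $|a'-a|\ll_A 1$ uses that every hull has length $\ge\ell$, which holds because blocks consist of whole original intervals. Two further points need care. One is handling ties and shared endpoints, which is done as in the proof of Corollary~\ref{cor:exact-triple-overlap}, at a cost of a factor $2^3$. The other is ensuring that Lemma~\ref{lem:exactification} applies with a constant uniform in $\ell$, since its error term $r(\mathbf t)+O(A\ell^2)$ in the degenerate case must be $\ll_A\ell$. In the degenerate case $r(\mathbf u)=0$ we have $r(\mathbf t)^2\ll A\ell^2$, so that step is fine.
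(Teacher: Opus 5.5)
Your proposal is correct and is essentially the paper's argument: you fix a reference representation, use Lemma~\ref{lem:exactification} to move every other representation by $O_A(\ell)$ onto an exact representation of the reference moments, count exact cell triples with Corollary~\ref{cor:exact-triple-overlap} on a partition of $\R$ with unbounded exterior cells, and observe that each cell triple has only $O_A(1)$ preimages because the finite cells have length $\ge\ell$. One small correction: for a relabeled subfamily the hulls $K_j$ need not tile an interval, since the removed zero intervals leave gaps between consecutive hulls, so $\widetilde u_i$ could land outside every $K_j$; rather than speaking of \emph{the partition} $K_1<\cdots<K_h$, place cell boundaries at the midpoints of these gaps, as the paper does, so that each finite cell contains some $K_j$ and still has length $\ge\ell$, and the rest of your argument then goes through unchanged.
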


\begin{proof}
Put boundaries at the midpoints of the gaps between successive hulls and let
$J_j$ be the cell containing $K_j$; take the two exterior cells unbounded.
Every finite cell has length at least $\ell$.  Fix a frequency point belonging
to several sets in \eqref{eq:robust-triple-overlap} and choose one approximate
representation as a reference.  Every other representation differs from the
reference by $O(A\ell^2)$ in the first two moments.  By
Lemma~\ref{lem:exactification}, move its three parameters by $O_A(\ell)$ to the
exact reference set.  Each coordinate crosses only $O_A(1)$ cells.  The
exact cell labels have multiplicity $O(h)$ by
Corollary~\ref{cor:exact-triple-overlap}, and each has only $O_A(1)$ possible nearby
label triples.
\end{proof}

\medskip 

\subsection{Fourier supports of separated pairs}

Fix a large integer $L=L(A)$ with $L^2\ge C_0A$.  For a consecutive block
$M_j$, call $i,i'\in M_j$ near if $|i-i'|\le L$ and far otherwise.  Put
\[
 G_j=\sum_{i\in M_j}F_i
\]
and split
\begin{align}
 D_j&=\sum_{i\in M_j}|F_i|^2
 +2\operatorname{Re}
  \sum_{\substack{i<i',\ i,i'\in M_j\\
        |i-i'|\le L}}
       F_i\overline{F_{i'}},\label{eq:parabola-D}\\
 O_j&=|G_j|^2-D_j.\label{eq:parabola-O}
\end{align}
By Lemma~\ref{lem:near-graph},
\begin{equation}\label{eq:D-pointwise}
 |D_j|\ll_L\sum_{i\in M_j}|F_i|^2.
\end{equation}

For $u<v$,
\begin{equation}\label{eq:chord-direction}
 \gamma(v)-\gamma(u)=(v-u)(1,u+v).
\end{equation}
The slope of this chord is determined by the sum of its endpoints.

\begin{lemma}[Separated pairs lie in disjoint sectors]\label{lem:chord-sectors}
For every nonzero $O_j$ there is a two-sided planar sector $\mathcal C_j$
such that
\[
 \supp(\widehat O_j)
 \subset \mathcal C_j\cap\{\xi:|\xi_1|\ge cL\ell\}.
\]
The angular parts of the $\mathcal C_j$ are pairwise disjoint.  Although the
closed sectors meet at the origin, the Fourier supports stay a distance
$\gg L\ell$ from the origin and are therefore disjoint.
\end{lemma}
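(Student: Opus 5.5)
The plan is to read off the Fourier support of each far pair and then locate it by the chord formula \eqref{eq:chord-direction}. By \eqref{eq:parabola-D} and \eqref{eq:parabola-O}, $O_j=2\operatorname{Re}\sum F_i\overline{F_{i'}}$, summed over far pairs $i<i'$ in $M_j$ (so $i'-i>L$). Hence $\supp(\widehat O_j)$ lies in the union over far pairs of
\[
\pm\big(\mathcal N_{A\ell^2}(\gamma(I_{i'}))-\mathcal N_{A\ell^2}(\gamma(I_i))\big).
\]
The sign comes from the real part, and it is the reason the sectors are two-sided. A typical point of such a set is $\gamma(v)-\gamma(u)+e$ with $u\in I_i$, $v\in I_{i'}$, $|e|\le 2A\ell^2$. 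Since there are more than $L$ intervals between them, each of length at least $\ell$, we get $v-u\ge L\ell$. By \eqref{eq:chord-direction} this point equals $(v-u)(1,u+v)+e$, so its first coordinate is at least $L\ell-2A\ell^2\ge cL\ell$, using $\ell\le1$ and $L\ge A$. That gives the radial lower bound.

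For the angular part, the unperturbed chord has slope $u+v$. Let $K_j$ be the hull of block $M_j$. Then $u,v\in K_j$ gives $u+v\in 2K_j$, and these slope intervals are disjoint for different blocks because the blocks are consecutive with disjoint interiors. The perturbation changes the slope by at most $|e|/(v-u)\le 2A\ell^2/(L\ell)=2A\ell/L$. I would therefore take $\mathcal C_j$ to be the two-sided sector of directions $(1,\sigma)$ with $\sigma$ in a slightly shrunk or enlarged version of $2K_j$.

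The main obstacle is making these slope ranges pairwise disjoint after the $O(A\ell/L)$ fattening, since adjacent blocks share an endpoint. The fix is to use the fact that far pairs cannot reach the ends of the hull. Let $\alpha_j$ and $\beta_j$ be the left and right endpoints of $K_j$. If $u+v$ is near $2\beta_j$, then both $u$ and $v$ are near $\beta_j$. But $u$ lies at least $L\ell$ to the left of $v\le\beta_j$, so
\[
u+v\le 2\beta_j-L\ell.
\]
Symmetrically, $u+v\ge 2\alpha_j+L\ell$. So the unperturbed slopes lie in $[2\alpha_j+L\ell,\,2\beta_j-L\ell]$. Fattening by $2A\ell/L\le L\ell/4$, which holds once $L^2\ge C_0A$ with $C_0\ge 8$, keeps them inside $(2\alpha_j,2\beta_j)$.

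Define $\mathcal C_j=\{\lambda(1,\sigma):\lambda\in\R,\ \sigma\in[2\alpha_j+L\ell/2,\,2\beta_j-L\ell/2]\}$. This is a two-sided sector; it is empty, and then $O_j=0$, unless the block has a far pair. The slope intervals of different $j$ are disjoint, so the angular parts are pairwise disjoint. Points with $|\xi_1|\ge cL\ell$ inside different sectors are then distinct, which gives disjointness of the Fourier supports away from the origin.

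Finally, I would justify the support inclusion for the Schwartz products via Plancherel, as in the proof of Lemma~\ref{support-lem}.
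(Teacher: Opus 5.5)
Your argument is correct and follows the paper's proof: write each far-pair frequency as $\pm(v-u)(1,u+v)+e$, use the $L$ intervening intervals to get $v-u\ge L\ell$, observe that this pushes the exact slope $u+v$ at least $L\ell$ inside $2K_j$, and note that the thickening moves the slope by only $O(A\ell/L)$. One small repair is needed: the radial bound $L\ell-2A\ell^2\ge cL\ell$ and the slope perturbation, which is really $(|e_2|+|u+v|\,|e_1|)/(v-u-|e_1|)\ll A\ell/L$ rather than $|e|/(v-u)$, both require $\ell\le 1/L$ (from $L\ell\le v-u\le 1$), not merely $\ell\le 1$ and $L\ge A$, with the resulting constants absorbed by taking $C_0$ large as in the paper.
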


\begin{proof}
The explicit expansion
\[
 O_j=2\operatorname{Re}
 \sum_{\substack{i<i',\ i,i'\in M_j\\
       |i-i'|>L}}
 F_i\overline{F_{i'}}
\]
is essential.  Every diagonal or near term whose support might approach
the origin belongs to $D_j$.  A frequency in a retained difference support
has the form, for some $u\in I_i$ and $v\in I_{i'}$ with $i<i'$,
\[
 \xi=\pm d(1,s)+e,
 \qquad d=v-u,\quad s=u+v,\quad |e|\ll A\ell^2.
\]
For a far pair $i<i'$, one has $i'-i\ge L+1$, so at least $L$ intervals of the
current subfamily lie strictly between $I_i$ and $I_{i'}$.  Their total length
is at least $L\ell$, and therefore $d=v-u\ge L\ell$.  Since $d\le1$ whenever a
retained pair exists, $\ell\ll L^{-1}$, and
\[
 \frac{|e|}{d}\ll\frac{A\ell}{L}\ll\frac{A}{L^2}.
\]
Choosing $C_0$ large gives $|\xi_1|\ge d/2\gg L\ell$ and
\[
 \Big|\frac{\xi_2}{\xi_1}-s\Big|
 \ll\frac{A\ell}{L}.
\]

Let $b_j$ be the right endpoint of the hull of $M_j$, and let $a_{j+1}$ be
the left endpoint of the next hull.  Far chord slopes in $M_j$ are at most
$2b_j-cL\ell$, while those in $M_{j+1}$ are at least
$2a_{j+1}+cL\ell\ge2b_j+cL\ell$.  The exact slope gap is therefore
$\gg L\ell$, whereas thickening changes slope by
$O(A\ell/L)\ll L\ell$.  The positive sectors are disjoint.  The conjugate
terms lie in the corresponding negative sectors.
\end{proof}

\begin{figure}[htbp]
\centering
\resizebox{.78\textwidth}{!}{%
\begin{tikzpicture}[x=1cm,y=.80cm,>=Stealth,
  line cap=round,line join=round,
  parlabel/.style={font=\small},
  partiny/.style={font=\footnotesize}]
  \fill[gray!8] (-.9,-3.55) rectangle (.9,3.55);
  \draw[gray!65,densely dashed] (-.9,-3.55)--(-.9,3.55);
  \draw[gray!65,densely dashed] (.9,-3.55)--(.9,3.55);
  \foreach \parsign in {-1,1}{
    \begin{scope}[xscale=\parsign,yscale=\parsign]
      \path[fill=capblue!17,draw=capblue]
        (.9,.135)--(3.25,.4875)--(3.25,1.1375)--(.9,.315)--cycle;
      \path[fill=capteal!18,draw=capteal]
        (.9,.63)--(3.25,2.275)--(3.25,3.4125)--(.9,.945)--cycle;
      \draw[capblue!60,densely dotted] (0,0)--(.9,.135);
      \draw[capblue!60,densely dotted] (0,0)--(.9,.315);
      \draw[capteal!65,densely dotted] (0,0)--(.9,.63);
      \draw[capteal!65,densely dotted] (0,0)--(.9,.945);
    \end{scope}
  }
  \draw[axis] (-3.65,0)--(3.9,0) node[right] {$\xi_1$};
  \draw[axis] (0,-3.75)--(0,3.95) node[above] {$\xi_2$};
  \fill (0,0) circle (1.6pt);
  \node[partiny,below right] at (0,0) {$0$};
  \node[parlabel] at (2.45,.65) {$\mathcal C_j$};
  \node[parlabel] at (2.43,2.09) {$\mathcal C_{j+1}$};
  \node[partiny,align=center,fill=white,inner sep=2pt] at (0,-1.8)
    {no far-pair\\Fourier support};
  \draw[decorate,decoration={brace,mirror,amplitude=4pt}]
    (-.9,-3.85)--(.9,-3.85)
    node[midway,below=6pt,partiny] {$|\xi_1|<cL\ell$};
\end{tikzpicture}}
\caption{Far differences from different blocks lie in sectors with
disjoint angular parts; the
Fourier supports satisfy $|\xi_1|\geq cL\ell$, so the common vertex of
the sectors is absent.}
\label{fig:chord-sectors}
\end{figure}

\medskip 

\subsection{The recurrence}

We first assume that the nonzero norms
$a_i=\|F_i\|_6$ lie within a factor of two.  Let $B(m)$ be the best constant
in \eqref{eq:parabola-main}, without a prescribed $m^\varepsilon$ factor, over
families with at most $m$ nonzero summands of comparable mass.  Omitted
functions are left equal to zero, so every further subfamily is an admissible
instance of the same estimate.  In particular, $B(m)$ is nondecreasing.

Fix a large integer $h$.  Assume first that $m\ge h$; the range $m<h$ will be
included in the base case.  Divide $\{1,\ldots,m\}$ into $h$ nonempty
consecutive blocks $M_1,\ldots,M_h$ whose cardinalities differ by at most one.
Set
\[
 S=\sum_i a_i^2,
 \qquad s_j=\sum_{i\in M_j}a_i^2.
\]
Comparability and balanced cardinality imply
\begin{equation}\label{eq:mass-balance}
 \max_js_j\ll h^{-1}S,
 \qquad
 \sum_{j=1}^h s_j^3\ll h^{-2}S^3.
\end{equation}

Apply Lemma~\ref{lem:overlap} to the products
$G_{j_1}G_{j_2}G_{j_3}$ and use
Proposition~\ref{prop:robust-triple-overlap}.  This gives
\begin{equation}\label{eq:parabola-master}
 \Big\|\sum_iF_i\Big\|_6^6
 \ll_A h\int_{\R^2}\Big(\sum_{j=1}^h|G_j|^2\Big)^3.
\end{equation}
Indeed, after sorting the three block labels, each sorted triple has at most
six ordered permutations.  Thus the product supports have multiplicity
$O_A(h)$.

Write $D=\sum_jD_j$ and $O=\sum_jO_j$.  Then
$\sum_j|G_j|^2=D+O$.  Since $D+O\ge0$,
\[
 \int(D+O)^3=\|D+O\|_3^3
 \ll \|D\|_3^3+\|O\|_3^3.
\]
From \eqref{eq:D-pointwise} and Minkowski,
\begin{equation}\label{eq:parabola-near}
 \|D\|_3\ll_L\sum_i\||F_i|^2\|_3
 \ll_L S.
\end{equation}
The far pieces lie in the disjoint sectors of
Lemma~\ref{lem:chord-sectors} and stay away from the origin.  Therefore
Corollary~\ref{cor:flat-cone} gives
\[
 \|O\|_3^3\ll_\eta h^{1+\eta}\sum_j\|O_j\|_3^3.
\]
If $m_j=|M_j|\le\lceil m/h\rceil$, then
\begin{align*}
 \|O_j\|_3
 &\le \||G_j|^2\|_3+\|D_j\|_3\\
 &\ll_L\big(B(\lceil m/h\rceil)^2+1\big)s_j.
\end{align*}
Together with \eqref{eq:mass-balance}, this yields
\begin{equation}\label{eq:parabola-far}
 \|O\|_3^3
 \ll_{A,\eta}h^{-1+\eta}
 \big(B(\lceil m/h\rceil)^6+1\big)S^3.
\end{equation}
Substitution of \eqref{eq:parabola-near} and
\eqref{eq:parabola-far} into \eqref{eq:parabola-master} gives
\begin{equation}\label{eq:parabola-recurrence}
 B(m)^6\le C_{A,\eta}
 \Big[h+h^\eta\big(B(\lceil m/h\rceil)^6+1\big)\Big].
\end{equation}

For $m\ge h$, $\lceil m/h\rceil\le2m/h$.  Assuming
$B(r)^6\le K r^{6\varepsilon}$ for $r<m$, the recursive contribution is at most
\[
 C_{A,\eta}K2^{6\varepsilon}h^{\eta-6\varepsilon}m^{6\varepsilon}.
\]
Take $\eta=3\varepsilon$ and then choose $h=h(A,\varepsilon)$ so that
$C_{A,\eta}2^{6\varepsilon}h^{-3\varepsilon}\le\tfrac12$.  After enlarging $K$ to absorb
$C_{A,\eta}(h+h^\eta)$ and all $m<h$, strong induction gives
$B(m)^6\le K m^{6\varepsilon}$.  Thus
\begin{equation}\label{eq:B-bound}
 B(m)\le C_{A,\varepsilon}m^\varepsilon
\end{equation}
for comparable masses.

To remove comparability, let
\[
 \mathsf S=\Big(\sum_i a_i^2\Big)^{1/2},
 \qquad K=\lceil3\log_2N\rceil,
\]
and form the classes
\[
 \mathcal C_k=\{i:2^{-k-1}\mathsf S<a_i\le2^{-k}\mathsf S\},
 \qquad 0\le k\le K.
\]
Put
\[
 \mathsf S_k=\Big(\sum_{i\in\mathcal C_k}a_i^2\Big)^{1/2}.
\]
The remaining tail has $L^6$ norm at most
$N2^{-K-1}\mathsf S\le N^{-2}\mathsf S$.  Apply
\eqref{eq:B-bound} with exponent $\varepsilon/2$ to every nonempty class.  Then
\begin{align*}
 \Big\|\sum_iF_i\Big\|_6
 &\le C_{A,\varepsilon}N^{\varepsilon/2}\sum_{k=0}^K\mathsf S_k
      +N^{-2}\mathsf S\\
 &\le C_{A,\varepsilon}N^{\varepsilon/2}\sqrt{K+1}
      \Big(\sum_{k=0}^K\mathsf S_k^2\Big)^{1/2}+N^{-2}\mathsf S\\
 &\le C_{A,\varepsilon}N^\varepsilon\mathsf S.
\end{align*}
This proves Theorem~\ref{thm:parabola}.

\medskip 

\subsection{The energy consequence}\label{sec:parabola-energy}

We make explicit how the decoupling theorem recovers the parabola case of
\cite[Theorem~1.1]{Cushman-Demeter-Wu}. The argument is standard. For a finite nonempty set $X\subset\R$ and complex
coefficients $c=(c_t)_{t\in X}$, put $\gamma(t)=(t,t^2)$ and define
\[
 \mathcal E_3(c;X)=
 \sum_{\zeta\in\gamma(X)+\gamma(X)+\gamma(X)}
 \left|
 \sum_{\substack{t_1,t_2,t_3\in X\\
       \gamma(t_1)+\gamma(t_2)+\gamma(t_3)=\zeta}}
       c_{t_1}c_{t_2}c_{t_3}
 \right|^2.
\]
Theorem~\ref{thm:parabola} implies, for every $\varepsilon>0$,
\[
 \mathcal E_3(c;X)^{1/6}
 \le C_\varepsilon |X|^\varepsilon
       \left(\sum_{t\in X}|c_t|^2\right)^{1/2}.
\]
In particular, taking $c_t=1$ and renaming the exponent gives
\[
 J_3(\gamma(X))
 :=\#\left\{(t_1,\ldots,t_6)\in X^6:
       \sum_{i=1}^3\gamma(t_i)=\sum_{i=4}^6\gamma(t_i)\right\}
 \le C_\varepsilon |X|^{3+\varepsilon}.
\]

\begin{proof}[Derivation from Theorem~\ref{thm:parabola}]
An affine change of the parameter preserves equalities of the first two
moments, so assume $X\subset(0,1)$. Choose a partition of $[0,1]$ into
$|X|$ intervals with one point of $X$ in the interior of each, and let $\ell$
be its minimum width. The distinct triple sums of $\gamma(X)$ form a finite
set. Choose a nonzero Schwartz function $\phi$ whose Fourier support is a
sufficiently small ball about zero that the functions
\[
 F_t(x)=c_t\phi(x)e^{2\pi\mathrm{i} x\cdot\gamma(t)}
\]
satisfy the support hypotheses of Theorem~\ref{thm:parabola}, with $A=1$,
and the translates of $\supp(\widehat{\phi^3})$ by distinct triple sums are
disjoint. Cubing the sum, grouping equal triple sums, and applying Plancherel
then gives the exact identity
\[
 \left\|\sum_{t\in X}F_t\right\|_6^6
 =\|\phi\|_6^6\,\mathcal E_3(c;X).
\]
Since $\|F_t\|_6=|c_t|\|\phi\|_6$, Theorem~\ref{thm:parabola} and cancellation
of $\|\phi\|_6$ give the claimed weighted bound. Although the support radius
of $\widehat\phi$ depends on $X$, the decoupling constant does not. Thus no
separation or diameter parameter enters the conclusion.
\end{proof}

\bigskip

\bibliographystyle{alpha}
\bibliography{bibli}

\end{document}